\documentclass[11pt]{article}
\bibliographystyle{plain}
\usepackage{amssymb,amsmath,amsthm,mathdesign}
\usepackage{pdfsync}
\usepackage{color}
\usepackage[colorlinks]{hyperref}

%\RequirePackage{color}
%\RequirePackage[colorlinks,urlcolor=my-blue,linkcolor=my-red,citecolor=my-green]{hyperref}
%\definecolor{my-blue}{rgb}{0.0,0.0,0.6}
%\definecolor{my-red}{rgb}{0.5,0.0,0.0}
%\definecolor{my-green}{rgb}{0.0,0.5,0.0}

%\usepackage{showkeys}
%!TEX encoding = UTF-8 Unicode
%\usepackage[notref,notcite]{showkeys}
%%%%%%%%
\addtolength{\hoffset}{-.5in}
\addtolength{\textwidth}{1.0in}
%\addtolength{\footskip}{.3in}
%\setlength{\oddsidemargin}{0in}
%\setlength{\evensidemargin}{0in}
%\setlength{\footskip}{0.2in}
%\setlength{\topmargin}{0.5in}

%\baselineskip=10pt

\newtheorem{theorem}{Theorem}[section]
\newtheorem{lemma}[theorem]{Lemma}
\newtheorem{proposition}[theorem]{Proposition}
\newtheorem{corollary}[theorem]{Corollary}
\newtheorem{assumption}[theorem]{Assumption}
\newtheorem{remark}[theorem]{Remark}
\newtheorem{example}[theorem]{Example}
\numberwithin{equation}{section}

\newcommand{\be}{\begin{equation}}
\newcommand{\ee}{\end{equation}}

\newcommand{\bes}{\begin{equation*}}
\newcommand{\ees}{\end{equation*}}

%%notation specific to this material
\def\E{\bE}

 %% environment measure
 %environment at a level
  %spatial shift
  %set of admissible steps
 %jump variable chosen from transition for couplings
   %macroscopic flux

%%generic definitions

\def\cG{\mathcal{G}}

\def\bE{\mathbb{E}}

\newcommand{\R}{\mathbf{R}}

\newcommand{\1}{\boldsymbol{1}}
\renewcommand{\d}{{\rm d}}

\renewcommand{\geq}{\geqslant}
\renewcommand{\leq}{\leqslant}

%newcommand{\E}{\mathrm{E}}

\def\m1{\mathbf{1}}

%%new def's

    %%char function

\allowdisplaybreaks[1]

\allowdisplaybreaks[1]

 \pagestyle{plain}

%opening
\title{Some non-existence results for a class of stochastic partial differential equations.}
\author{Mohammud Foondun\\University of Strathclyde
\and Wei Liu \\ Shanghai Normal University
\and Erkan Nane\\ Auburn University }
\date{}
\begin{document}
\maketitle

\begin{abstract}
Consider the following stochastic partial differential equation,
\begin{equation*}
\partial_t u_t(x)=
 \mathcal{L}u_t(x)+ \sigma (u_t(x))\dot F(t,x)\quad{t>0}\quad\text{and}\quad x\in \R^d.
\end{equation*}
The operator $\mathcal{L}$ is the generator of a strictly stable process and $\dot F$ is the random forcing term which is assumed to be Gaussian. Under some additional conditions, most notably on $\sigma$ and the initial condition, we show non-existence of global random field solutions. Our results are new and complement those of P. Chow in \cite{chow2} and  \cite{chow1}.
\end{abstract}
 {\bf Keywords:} Fractional stochastic equation, space-time white noise, space colored noise.
%\newpage
\section{Introduction and main results}
Consider
\begin{equation*}
\partial_t u_t(x)=
 \mathcal{L}u_t(x)+ \sigma (u_t(x)) \dot F(t,x)\quad \text{for all}\quad t>0 \quad\text{and}\quad x\in \R^d.
\end{equation*}
Here $\mathcal{L}$ denotes the fractional Laplacian, the generator of an $\alpha$-stable process and $\dot F$ is the random forcing term which we will take to be white in time and possibly colored in space. The initial condition will always be assumed to be a non-negative bounded deterministic function.  The function $\sigma$ is a locally Lipschitz function. The main aim of this paper is to show that under some additional conditions on the initial condition and the function $\sigma$, equations of the above type cannot have global random field solutions. By `global' existence, we mean that the solution exists for all times.  The failure of global solutions usually manifests itself via the `blow up' of certain quantities involving the solution. In this paper, we will be concerned with the moments of the solution.

Blow-up or non-existence problems are of interest from a theoretical point of view. They are also very useful for applied researchers. Physically, blow-up might represent cracks and various other singularities; see \cite{Levine} and \cite{DL} for more information regarding these questions in the deterministic settings. Even though for stochastic equations, the literature regarding these types of problems is not that rich, blow-up of stochastic partial differential equations have started to receive some attention. Mueller and Sowers in \cite{Mueller2000,MuellerSowers1993} points out that the time-space white noise driven stochastic heat equations with Dirichlet boundary condition will blow up in finite time with positive probability, if $\sigma(u) = u^{\gamma}$ with $\gamma >3/2$. When a drift function $f$ is taken into consideration, Bonder and Groisman in \cite{FernandezGroisman2009} discuss equations of the following type
\begin{equation}
\label{generalSPDEwithdrift}
\partial_t u_t(x)=
 \mathcal{L}u_t(x) +  f(u_t(x))+ \sigma (u_t(x)) \dot F(t,x)\quad \text{for all}\quad t>0 \quad\text{and}\quad x\in D \subset \R^d,
\end{equation}  
with  $\sigma(u)$ to be a constant, $\dot F(t,x)$ to be the space-time white noise and $D = (0,1)$.
The authors reveal that the solution will blow up in finite time with probability one for every initial data, if the drift function is nonnegative convex and admits $\int^{\infty} 1/f < \infty$. Lv and Duan in \cite{LvDuan2015} investigate (\ref{generalSPDEwithdrift}) in higher spatial dimension and reveal the effects of the interplay between $f$ and $\sigma$ on the finite time blow-up of the solution in the moment sense. Bao and Yuan in \cite{BaoYuan2016} study the finite time blow-up in $L^p$-norm of stochastic reaction-diffusion equations with jumps within a bounded domain. Li, Peng and Jia in \cite{LiPengJia2016} consider the blow-up in $L^p$-norm for a class of L\'evy noise driven SPDEs, which extends the results in \cite{LvDuan2015}.

Our work complements a series of very interesting results by P. Chow who in \cite{chow1} shows non-existence of global solutions in the $L^p$-norm. As opposed to all the papers mentioned above, here our equations are defined on the whole space. Another difference is that our equations involve a non-local operator instead of a local one as in all the papers mentioned above. We will describe our method with more precision later on. But it is worth noting that here we employ completely different techniques relying mostly on sharp heat kernel estimates. We have worked quite hard to present our method in a simple way because we believe that it is versatile enough to be applied to various other equations. In fact, our last theorem is much closer in spirit to the results in \cite{chow2} and \cite{chow1} in that we look at equations on bounded domain; see Theorem \ref{Dirichlet} and the discussion preceding it.  Our results can also be seen as a continuation of the work of Foondun and Parshad \cite{Foondun-Parshad-15} on the non-existence of finite energy solutions of parabolic stochastic partial differential equations.

We now proceed to describe our results with more precision. We will work with white and colored noise driven equations and present the results pertaining to each type of equations separately. Firstly, we will look at the following
\begin{equation}\label{eq:white}
\partial_t u_t(x)=
 \mathcal{L}u_t(x)+ \sigma (u_t(x)) \dot W(t,x)\quad{t>0}\quad\text{and}\quad x\in \R^d.
\end{equation}

A mild solution to \eqref{eq:white} in the sense of Walsh \cite{walsh} is any $u$ which is adapted to the filtration generated by the white noise and satisfies the following evolution equation
\begin{equation}\label{mild}
u_t(x)=
(\cG u)_t(x)+ \int_{\R^d}\int_0^t p_{t-s}(x-y)\sigma(u_s(y))W(\d s\,\d y),
\end{equation}
where
\begin{equation}\label{deter}
(\cG u)_t(x):=\int_{\R^d} p_t(x-y)u_0(y)\,\d y,
\end{equation}
and $p_t(\cdot)$ denotes the heat kernel of the fractional Laplacian of order $\alpha$. If we further have
\begin{equation}\label{moments}
\sup_{x\in[0,\,L]}\sup_{t\in[0,\,T]} \E|u_t(x)|^k<\infty \quad\text{for all}\quad T>0 \quad\text{and}\quad k\in[2,\,\infty],
\end{equation}
then we say that $u$ is a {\it random field} solution. Usually, existence is proved under the assumption that $\sigma$ is globally Lipschitz. But this can be proved under the local Lipschitz condition as well. To see this, define 
\begin{equation*}
\tau_N:= \inf \{t > 0, \sup_{x\in \R^d}|u_t(x)| > N\},
\end{equation*} 
then clearly $\tau_N$ is a stopping time representing the first time that $\sup_{x\in \R^d}|u_t(x)|$ exits $N$. We now have $|\sigma(u_s(x))-\sigma(u_s(y))|\leq K_N |u_s(x)-u_s(y)|$ for any $s \leq \min(t,\tau)$, where $K_N$ is a constant dependent on $N$. Following the techniques in \cite{Davar} and \cite{walsh}, we can prove existence and uniqueness of a local solution in $(0, \min(t,\tau))$ provided that $1<\alpha<2$ and $d=1$; two conditions which we will be in force whenever we are dealing with the above equation. Throughout this paper, the initial condition $u_0$ will always be a non-negative bounded deterministic function.  We will also look at equations driven by colored noise. Consider
\begin{equation}\label{eq:colored}
\partial_t u_t(x)=
 \mathcal{L}u_t(x)+ \sigma (u_t(x)) \dot F(t,x)\quad{t>0}\quad\text{and}\quad x\in \R^d.
\end{equation}
The corresponding mild solution in the sense of Walsh \cite{walsh} is given by
\begin{equation}\label{mild}
u_t(x)=
(\cG u)_t(x)+ \int_{\R^d}\int_0^t p_{t-s}(x-y)\sigma(u_s(y))F(\d s\,\d y).
\end{equation}
We will again be interested in the random field solution. But for this equation, we will need to impose some conditions on the noise term. We have
\begin{align*}
\E[\dot F(s,x)\dot F(t,y)]=\delta_0(t-s)f(x,\,y),
\end{align*}
where $f(x,\,y)\leq \tilde{f}(x-y)$ and $\tilde{f}$ is a locally integrable function on $\R^d$ with a possible singularity at $0$. We will further assume that for any $\epsilon>0$,
\begin{align*}
\int_{|x|\leq \epsilon}\tilde{f}(x)\log\frac{1}{|x|}\,\d x<\infty\quad\text{if}\quad d=2
\end{align*}
or
\begin{align*}
\int_{|x|\leq \epsilon}\tilde{f}(x)\frac{1}{|x|^{d-3}}\,\d x<\infty\quad\text{if}\quad d\geq 3.
\end{align*}
As stated above, $\sigma$ is a locally Lipschitz function. If instead, we assume that $\sigma$ is globally Lipschitz, then the equations described above have unique global random field solutions, meaning that \eqref{moments} holds for {\it all} $T>0$; see \cite{Davar} for more information. What we have in mind here are functions $\sigma$ which are polynomial-like so that \eqref{moments} will be shown to hold for only some finite range of time.

As mentioned above, not a lot of work has been done in these types of questions for stochastic partial differential equations. But the situation is completely different for deterministic differential equations. Fujita in \cite{Fujita}, considered
\begin{equation*}
\partial_t u_t(x)=\Delta u_t(x)+u(x)^{1+\lambda}\quad\text{for}\quad t>0\quad x\in \R^d,
\end{equation*}
with initial condition $u_0$ and $\lambda>0$. Set $p_c:= 2/d$, Fujita showed that for $\lambda<p_c$, the only global solution is the trivial one. While for $\lambda>p_c$, global solutions exist whenever $u_0$ is small enough. See \cite{Fujita} for details. At first, this result might seem counterintuitive, but the right intuition is that for large $\lambda$, if the initial condition is small, then $u^{1+\lambda}$ is even smaller and the dissipative effect of the Laplacian prevents the solution to grow too big for blow-up to happen. And when $\lambda$ is close to zero, then irrespective of the size of the initial condition, the dissipative effect of the Laplacian cannot prevent blow up of the solution. In our setting, we are looking at the moments of the solution. There is still an interplay between the dissipative effect of the operator and the forcing term and we are able to shed light only part of the true picture.  We show that if the initial condition is large enough then there there is no global solution. It might very well be just like for the deterministic case, if the non-linearity is high enough, then for small initial condition, there exists global solutions.
 \begin{assumption}
The function $\sigma$ is a locally Lipschitz satisfying the following growth condition. There exist a $\gamma>0$ such that
\begin{equation}\label{growth}
\sigma(x)\geq |x|^{1+\gamma}\quad\text{for all}\quad x\in \R^d.
\end{equation}
\end{assumption}
We are now ready to describe our findings in detail. For our first set of results, we will assume that the initial condition is bounded below by a positive constant. We therefore set
\begin{equation}\label{minumum-values-u0}
\inf_{x\in\R^d}u_0(x):=\kappa.
\end{equation}

\begin{theorem}\label{theo1}
Let $u_t$ be be the solution to \eqref{eq:white} and suppose that $\kappa>0$. Then there exists a $t_0>0$ such that for all $x\in \R$,
\begin{equation*}
\E|u_t(x)|^2=\infty\quad\text{whenever}\quad t\geq t_0.
\end{equation*}
\end{theorem}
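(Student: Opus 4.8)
The plan is to turn the statement into a nonlinear renewal inequality for the second moment and then show that such an inequality, with a strictly superlinear nonlinearity and a positive floor coming from $\kappa$, forces the moment to become infinite after a finite time. The Walsh isometry identity below is valid as long as the second moment is finite, so showing that the renewal bound blows up will force $\E|u_t(x)|^2$ itself to be infinite past the blow-up time.

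First I would apply the Walsh isometry to the mild formulation \eqref{mild}. Since the stochastic integral has mean zero and $\dot W$ is white in time, this gives
\[
\E|u_t(x)|^2 = \big((\cG u)_t(x)\big)^2 + \int_0^t\int_{\R} p_{t-s}(x-y)^2\,\E\big[\sigma(u_s(y))^2\big]\,\d y\,\d s.
\]
The deterministic part is bounded below by $\kappa^2$, because $u_0\geq\kappa$ and $\int_{\R}p_t = 1$, so $(\cG u)_t(x)\geq\kappa$ for every $x$. For the stochastic part I would use the growth condition $\sigma(z)\geq|z|^{1+\gamma}$ together with Jensen's inequality to get $\E[\sigma(u_s(y))^2]\geq \E|u_s(y)|^{2(1+\gamma)}\geq\big(\E|u_s(y)|^2\big)^{1+\gamma}$. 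Next I would exploit the scaling of the stable kernel in dimension one: by Plancherel, $\int_{\R}p_s(y)^2\,\d y = \frac{1}{2\pi}\int_{\R} e^{-2s|\xi|^\alpha}\,\d\xi = c_\alpha\, s^{-1/\alpha}$, and here the hypotheses $1<\alpha<2$ and $d=1$ are exactly what place the exponent $1/\alpha$ in $(1/2,1)$, so that $(t-s)^{-1/\alpha}$ is integrable near $s=t$. Writing $f(t):=\inf_{x\in\R}\E|u_t(x)|^2$ and bounding $\E|u_s(y)|^2$ below by $f(s)$ inside the space integral, the three estimates combine into
\[
f(t)\;\geq\;\kappa^2 + c_\alpha\int_0^t (t-s)^{-1/\alpha}\,f(s)^{1+\gamma}\,\d s.
\]

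It then remains to show that any nonnegative $f$ satisfying this inequality must be infinite beyond a finite threshold; this is the main obstacle. I would argue via the Picard-type iteration $g_0\equiv\kappa^2$, $g_{n+1}(t)=\kappa^2+c_\alpha\int_0^t(t-s)^{-1/\alpha}g_n(s)^{1+\gamma}\,\d s$, which by monotonicity of the map satisfies $g_n\leq g_{n+1}\leq f$ for every $n$. Feeding in power bounds $g_n(t)\geq a_n t^{b_n}$ and evaluating the resulting Beta integrals (via $s=rt$) shows $b_{n+1}=(1+\gamma)b_n + (1-1/\alpha)$, so the exponents grow geometrically to infinity, while the coefficients evolve by $a_{n+1}= c_\alpha\, a_n^{1+\gamma}\, B\big(1-1/\alpha,\,(1+\gamma)b_n+1\big)$, the base step being $g_1(t)\geq \frac{c_\alpha\kappa^{2(1+\gamma)}}{1-1/\alpha}\,t^{1-1/\alpha}$.

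The delicate point, which I expect to be the technical heart, is the competition between the exploding exponents $b_n$ and the possibly decaying coefficients $a_n$. Normalising by $(1+\gamma)^n$, the linear recursion gives $b_n/(1+\gamma)^n\to K=(1-1/\alpha)/\gamma>0$, while the asymptotics $B(1-1/\alpha,m)\sim\Gamma(1-1/\alpha)\,m^{-(1-1/\alpha)}$ reduce the coefficient recursion to $\log a_{n+1}=(1+\gamma)\log a_n + O(n)$, whose increments are summable after normalisation, so $\log a_n/(1+\gamma)^n\to L$ for some finite $L$. Consequently $(1+\gamma)^{-n}\log\big(a_n t^{b_n}\big)\to L+K\log t$, which is positive precisely when $t> e^{-L/K}=:t_0$; hence $g_n(t)\to\infty$ and therefore $f(t)=\infty$ for every such $t$. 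Since the infinite values form an interval of the form $(t_0,\infty)$ and the renewal inequality propagates an infinite value forward in time, the conclusion $\E|u_t(x)|^2=\infty$ holds for all $x$ and all $t\geq t_0$ after relabelling the threshold; the isometry and kernel estimates of the earlier steps are routine by comparison.
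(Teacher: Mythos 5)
Your proof is correct, and up to the renewal inequality it coincides with the paper's argument: both apply the Walsh isometry to \eqref{mild}, bound the deterministic term below by $\kappa^2$, combine the growth condition \eqref{growth} with Jensen's inequality to get $\E|\sigma(u_s(y))|^2\geq\left(\E|u_s(y)|^2\right)^{1+\gamma}$, and use
\begin{equation*}
\int_\R p_{t-s}^2(x-y)\,\d y=c\,(t-s)^{-1/\alpha}
\end{equation*}
(you via Plancherel and the symbol of the stable semigroup, the paper via the pointwise bounds \eqref{heat}; both are fine and both use $d=1$, $\alpha>1$), arriving at the same inequality
\begin{equation*}
F(t)\geq \kappa^2+c\int_0^t\frac{F(s)^{1+\gamma}}{(t-s)^{1/\alpha}}\,\d s,\qquad F(t):=\inf_{x\in\R}\E|u_t(x)|^2.
\end{equation*}
Where you genuinely diverge is in how this inequality is forced to blow up. The paper (Proposition \ref{rem-volterra} and the remark following it) discards the singularity via $(t-s)^{-1/\alpha}\geq t^{-1/\alpha}$, substitutes $h(t)=t^{1/\alpha}F(t)$, and compares with an explicitly solvable ODE; this is short and yields an explicit threshold, but it invokes a comparison principle for singular Volterra inequalities without proof and needs the separate splitting $\gamma=\gamma_0+(\gamma-\gamma_0)$ to cover $(1+\gamma)/\alpha\geq 1$. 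You instead run the monotone iteration $g_{n+1}(t)=\kappa^2+c\int_0^t(t-s)^{-1/\alpha}g_n(s)^{1+\gamma}\,\d s$, justify $g_n\leq F$ by induction (which is exactly what replaces the comparison principle), and propagate power-law lower bounds $a_nt^{b_n}$: the exponents obey $b_{n+1}=(1+\gamma)b_n+(1-1/\alpha)$ and grow geometrically, while the Beta-function asymptotics make $(1+\gamma)^{-n}\log a_n$ converge, so $a_nt^{b_n}\to\infty$ precisely for $t>e^{-L/K}$. Your route is self-contained, keeps the singular kernel, and treats all $\gamma>0$ and $\alpha\in(1,2)$ uniformly with no case split, at the cost of more bookkeeping and a less explicit $t_0$. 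Two minor remarks: the closing sentence about ``propagating'' infinite values forward is unnecessary, since $L+K\log t>0$ holds for every $t>t_0$, so your argument gives $F(t)=\infty$ on all of $(t_0,\infty)$ directly; and, as in the paper, the isometry and the renewal inequality should be read as valid on any interval where the random-field solution has finite second moments, the divergence of your lower bound being what precludes global existence---your opening caveat already covers this, and the paper is no more careful on that point.
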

The above result states that provided that the initial function is bounded below, the second moment will eventually cease to be finite for white noise driven equations.  For equations with colored noise, we have a slightly more complicated picture.  We will need the following non degeneracy condition on the spatial correlation of the noise.
\begin{assumption}\label{color}
Fix $R>0$, then there exists some positive number $K_{f}$ such that
\begin{equation*}
\inf_{x,\,y\in B(0,\,R)}f(x,\,y)\geq K_{f}.
\end{equation*}
\end{assumption}
Of course the constant $K_f$ is allowed to depend on $R$ but since we will mostly set $R=1$ when using this condition, we do not specify the dependence on $R$. The following examples show that the assumption is also very mild.
\begin{example}
For the following list of examples  Assumption \ref{color} is satisfied:
\begin{itemize}
\item Riesz kernel: \begin{equation*}f(x,\,y)=\frac{1}{|x-y|^\beta} \quad\text{with} \quad\beta< d \wedge \alpha.\end{equation*}
\item The Exponential-type kernel: $f(x,\,y)=\exp[-(x\cdot y)].$
\item The Ornstein-Uhlenbeck-type kernels: $f(x,\,y)=\exp[-|x-y|^\alpha]$ with $\alpha\in(0,\,2].$
\item Poisson Kernels: $$f(x,\,y)=\left(\frac{1}{|x-y|^2+1}\right)^{(d+1)/2}.$$
\item Cauchy Kernels: $$f(x,\,y)=\sum_{j=1}^d\left(\frac{1}{1+(x_j-y_j)^2}\right).$$
\end{itemize}
\end{example}

\begin{theorem}\label{theo2}
Let $u_t$ be the solution to \eqref{eq:colored} and suppose that Assumption \ref{color} holds. Fix $t_0>0$, then there exists a positive number $\kappa_0$ such that for all $\kappa\geq \kappa_0$, and  $ x \in \R^d$ we have
\begin{equation*}
\E|u_t(x)|^2=\infty, \quad \text{whenever}\quad t\geq t_0.
\end{equation*}
\end{theorem}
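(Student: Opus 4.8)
The plan is to run, for the colored-noise equation \eqref{eq:colored}, the same second-moment scheme that underlies Theorem \ref{theo1}, the essential new difficulty being the off-diagonal spatial correlation produced by $\dot F$. First I would apply the Walsh--It\^o isometry to the mild formulation \eqref{mild}, obtaining the exact identity
\begin{equation*}
\E|u_t(x)|^2=|(\cG u)_t(x)|^2+\int_0^t\int_{\R^d}\int_{\R^d}p_{t-s}(x-y)p_{t-s}(x-z)f(y,z)\,\E[\sigma(u_s(y))\sigma(u_s(z))]\,\d y\,\d z\,\d s.
\end{equation*}
Since $u_0\ge\kappa$ and $p_t$ integrates to one, the first term is at least $\kappa^2$; the whole argument is run under the standing assumption (to be contradicted) that $u$ is a global random field solution, so that every moment below is finite. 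The objective is to show that $F(t):=\inf_{x\in B(0,R/2)}\E|u_t(x)|^2$ satisfies a superlinear Volterra inequality with no finite solution on $[0,t_0]$ once $\kappa$ is large.

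The heart of the matter is to extract a genuine \emph{diagonal} contribution from the double spatial integral. Using Assumption \ref{color} I would first replace $f$ by the constant $K_f$ on $B(0,R)^2$, reducing the noise term to $K_f\,\E[(\int_{B(0,R)}p_{t-s}(x-y)\sigma(u_s(y))\,\d y)^2]$. In the white-noise case of Theorem \ref{theo1} the isometry produces $\E[\sigma(u_s(y))^2]$ directly, and $\E[\sigma(u_s(y))^2]\ge(\E|u_s(y)|^2)^{1+\gamma}$ follows from \eqref{growth} and Jensen for every $\gamma>0$; here the analogous step is obstructed, because merely bounding $\E[(\int p\sigma)^2]\ge(\E\int p\sigma)^2$ collapses everything to first moments of $\sigma$, which are controlled only by $\kappa$ and so cannot sustain the blow-up. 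I would instead retain the cross term and restrict the inner $z$-integral to the near-diagonal set $\{|y-z|<\rho\}$. There I would write
\begin{equation*}
\E[\sigma(u_s(y))\sigma(u_s(z))]\ge\E[\sigma(u_s(y))^2]-\big(\E[\sigma(u_s(y))^2]\big)^{1/2}\big(\E[(\sigma(u_s(z))-\sigma(u_s(y)))^2]\big)^{1/2}
\end{equation*}
and use the local Lipschitz property of $\sigma$ together with the heat-kernel modulus of continuity of $u_s$ to absorb the increment term into $\tfrac12\E[\sigma(u_s(y))^2]$ when $\rho$ is small. This recovers a diagonal lower bound of the form $cK_f\int_0^t\int_{B(0,R)}p_{t-s}(x-y)\E[\sigma(u_s(y))^2]\,\d y\,\d s$, after which \eqref{growth} and Jensen supply the superlinear factor $(\E|u_s(y)|^2)^{1+\gamma}\ge F(s)^{1+\gamma}$ for all $\gamma>0$.

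Next I would use the sharp two-sided estimates for the $\alpha$-stable kernel to bound $\inf_{x\in B(0,R/2)}\int_{B(0,R)}p_{t-s}(x-y)\,\d y\ge c_0>0$ uniformly for $t-s\in(0,t_0]$; unlike the white-noise kernel $\int p_{t-s}^2\sim(t-s)^{-1/\alpha}$, this factor has no singularity, so the scheme assembles into
\begin{equation*}
F(t)\ge\kappa^2+c\int_0^t F(s)^{1+\gamma}\,\d s,\qquad t\in[0,t_0],
\end{equation*}
with $c=c(K_f,R,\alpha,t_0)>0$. Comparison with the solution of $\bar F'=c\bar F^{1+\gamma}$, $\bar F(0)=\kappa^2$, which explodes at $t_*=(c\gamma\kappa^{2\gamma})^{-1}$, forces $F$ to explode before $t_0$ as soon as $\kappa\ge\kappa_0:=(c\gamma t_0)^{-1/(2\gamma)}$; this is precisely the announced threshold and explains why $\kappa_0$ depends on the prescribed $t_0$. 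Since $F(t_0)=\infty$ means $\E|u_{t_0}(x)|^2=\infty$ for every $x\in B(0,R/2)$, I would finally propagate to arbitrary $x\in\R^d$ and $t\ge t_0$ by reinserting this information into the identity above, the strictly positive kernel $p$ carrying the infinite diagonal mass to every point.

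The main obstacle is the diagonal-recovery step of the second paragraph: the near-diagonal increment bound must hold uniformly in $s$ up to the blow-up time, which requires the spatial regularity estimates for $u_s$ (and the moment bounds they rest on) to carry constants that do not deteriorate as $t\uparrow t_0$. By contrast, the white-noise argument never faces this issue, since there the diagonal second moment $\E[\sigma(u_s)^2]$ is handed to us directly by the isometry.
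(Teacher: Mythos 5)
Your reduction correctly isolates the real difficulty: the colored-noise isometry hands you the off-diagonal quantity $\E[\sigma(u_s(y))\sigma(u_s(z))]$, and collapsing it to first moments destroys the superlinearity. But your proposed repair does not work. The absorption step
\begin{equation*}
\big(\E[(\sigma(u_s(z))-\sigma(u_s(y)))^2]\big)^{1/2}\big(\E[\sigma(u_s(y))^2]\big)^{1/2}\le\tfrac12\,\E[\sigma(u_s(y))^2]
\end{equation*}
would require $\E[(\sigma(u_s(z))-\sigma(u_s(y)))^2]$ to be small compared with $\E[\sigma(u_s(y))^2]$ uniformly in $s$ up to the putative blow-up time. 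Since $\sigma$ is only locally Lipschitz and in fact must grow at least like $|x|^{1+\gamma}$ by \eqref{growth}, the increment $\E[(\sigma(u_s(z))-\sigma(u_s(y)))^2]$ is controlled by moments of $u_s$ of order strictly higher than two (terms of the type $\E[|u_s|^{2\gamma}|u_s(z)-u_s(y)|^2]$), and any modulus-of-continuity estimate for $u_s$ carries constants proportional to precisely those moments --- the quantities whose finiteness the argument is in the process of destroying. You acknowledge this circularity in your closing paragraph, but acknowledging it does not remove it: near the blow-up time there is no reason the increment term is dominated by half the diagonal term, however small $\rho$ is. The same problem resurfaces in your final propagation step: the isometry identity for $\E|u_t(x)|^2$ contains the off-diagonal $\E[\sigma(u_s(y))\sigma(u_s(z))]$, and knowing that the diagonal second moments are infinite on $B(0,R/2)$ gives no lower bound on this mixed quantity (Cauchy--Schwarz runs in the wrong direction), so the infinite mass does not automatically spread to all $x\in\R^d$ and $t\ge t_0$.

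The paper dissolves both problems with one device your proposal misses: instead of the second moment, it tracks the two-point function $G(s):=\inf_{x,y\in B(0,1)}\E|u_s(x)u_s(y)|$ (Proposition \ref{prop-colored}). The growth condition and Jensen give directly $\E[\sigma(u_s(z))\sigma(u_s(w))]\ge\E[|u_s(z)u_s(w)|^{1+\gamma}]\ge(\E|u_s(z)u_s(w)|)^{1+\gamma}$, so the off-diagonal quantity produced by the noise is already a superlinear power of the quantity being tracked --- no diagonal recovery, no spatial regularity of $u_s$, no uniformity issue. Proposition \ref{prop-kernel} then bounds the kernel mass below for $t\le(1/2)^{\alpha}$, yielding $G(t)\ge\kappa^2+c\,K_f\int_0^t G(s)^{1+\gamma}\,\d s$, whose blow-up time is pushed below $t_0$ by taking $\kappa$ large (via the proof of Proposition \ref{volterra}); this is the same threshold mechanism you computed, but now resting on a closed inequality. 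Finally, with $\E|u_s(y)u_s(w)|=\infty$ for $y,w\in B(0,1)$ and $s\ge t_0$ in hand, this very two-point quantity sits inside the integral representation of $\E|u_t(x)|^2$, and strict positivity of $p$ and $f$ finishes the proof at every $x\in\R^d$. If you want to salvage your scheme, the fix is not a sharper near-diagonal estimate but replacing your unknown $F$ by the two-point function.
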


Unlike in Theorem \ref{theo1}, to establish non-existence of the second moment, we require that the initial condition is large enough. We believe that such a condition might be required.  This is because we know  that the fact the noise is spatially correlated induces some extra dissipation effect. In fact, even in the case of the corresponding linear equation($\sigma(u)\propto u$), it is known that for some correlation functions the moments might not grow exponentially fast. See for instance \cite{ChenKim} and \cite{HuLeNualart}. However, if we focus our attention on the case when the correlation function is given by the Riesz Kernel, we have the following stronger result concerning the solution to \eqref{eq:colored}.
\begin{theorem}\label{Theo3}
Suppose that the correlation function $f$ is given by
\begin{align*}
f(x,\,y)=\frac{1}{|x-y|^\beta}\quad\text{with}\quad \beta<\alpha\wedge d.
\end{align*}
Then for $\kappa>0$, there exists a positive number $\tilde{t}$ such that for all $t\geq \tilde{t}$ and $x\in \R^d$,
\begin{equation*}
\E|u_t(x)|^2=\infty.
\end{equation*}
\end{theorem}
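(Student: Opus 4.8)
The plan is to derive a nonlinear renewal inequality for the quantity $F(t):=\inf_{x\in\R^d}\E|u_t(x)|^2$ and to show that, because the Riesz kernel produces a \emph{pure power} memory kernel that is non-integrable at infinity, this inequality forces blow-up in finite time for \emph{every} $\kappa>0$. First I would square the mild formulation and take expectations. Since the stochastic integral has mean zero and $(\cG u)_t(x)$ is deterministic, the cross term drops and I obtain
\[
\E|u_t(x)|^2=\big((\cG u)_t(x)\big)^2+\int_0^t\!\!\int_{\R^d}\!\!\int_{\R^d}p_{t-s}(x-y)p_{t-s}(x-z)\,\E[\sigma(u_s(y))\sigma(u_s(z))]\,\frac{dy\,dz}{|y-z|^\beta}\,ds.
\]
The deterministic term is bounded below by $\kappa^2$ because $\int p_t=1$ and $u_0\ge\kappa$, so $F(t)\ge\kappa^2$. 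For the stochastic term I would insert the growth bound $\sigma(u)\ge|u|^{1+\gamma}$ and exploit the exact self-similarity of the stable kernel: the substitution $y=x+(t-s)^{1/\alpha}y'$, $z=x+(t-s)^{1/\alpha}z'$ yields the scaling identity
\[
\int_{\R^d}\!\!\int_{\R^d}p_{t-s}(x-y)p_{t-s}(x-z)\,\frac{dy\,dz}{|y-z|^\beta}=C\,(t-s)^{-\beta/\alpha},\qquad C:=\int\!\!\int p_1(y)p_1(z)\,\frac{dy\,dz}{|y-z|^\beta},
\]
where $C<\infty$ precisely because $\beta<d$ makes the diagonal singularity integrable.

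Next I would convert the two-point correlation $\E[\sigma(u_s(y))\sigma(u_s(z))]$ into a super-linear function of $F(s)$. Writing $W:=|u_s(y)|\,|u_s(z)|$, the growth condition together with Jensen's inequality for the convex map $w\mapsto w^{1+\gamma}$ (applied first against the normalized weight $p\,p\,|y-z|^{-\beta}$ and then in expectation) reduces the integrand to a power of $\E[|u_s(y)|\,|u_s(z)|]$. A positivity observation makes this effective: from the mild formula the covariance $\Cov(u_s(y),u_s(z))$ equals the integral of the manifestly non-negative quantity $p_{s-r}(\cdot)p_{s-r}(\cdot)\E[\sigma\sigma]f$, so $\Cov\ge0$ and hence $\E[u_s(y)u_s(z)]\ge(\E u_s(y))(\E u_s(z))\ge\kappa^2$, while on the diagonal $\E|u_s(y)|^2\ge F(s)$. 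Combining these with the concentration of the Riesz weight near $y=z$, I would obtain a bound of the form $\E[\sigma(u_s(y))\sigma(u_s(z))]\gtrsim F(s)^{1+\gamma}$ on the relevant region, giving the closed inequality
\[
F(t)\ \ge\ \kappa^2+c\int_0^t(t-s)^{-\beta/\alpha}F(s)^{1+\gamma}\,ds .
\]

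Finally I would analyse this inequality. Since $\beta<\alpha\wedge d$ we have $\beta/\alpha<1$, so the kernel $(t-s)^{-\beta/\alpha}$ is integrable near $s=t$ (the inequality is meaningful) but non-integrable at infinity; the cumulative forcing $\int_0^\infty\tau^{-\beta/\alpha}\,d\tau$ diverges. A Bihari-type (nonlinear singular Gronwall) argument then shows that, because the exponent $1+\gamma$ is strictly larger than one, any function satisfying this inequality with baseline $\kappa^2>0$ must become infinite at some finite $\tilde t$; a smaller $\kappa$ only pushes $\tilde t$ further out. This is exactly the structural difference with Theorem \ref{theo2}: for a general correlation the memory kernel may be integrable, so a bounded amount of forcing must be overcome and $\kappa$ has to be large, whereas the pure power law of the Riesz kernel supplies unbounded cumulative forcing and lets every $\kappa>0$ work. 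Running this against the assumed finiteness of the moments gives $\E|u_t(x)|^2=\infty$ for all $t\ge\tilde t$ and all $x$.

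The hard part is the middle step: turning the two-point quantity $\E[\sigma(u_s(y))\sigma(u_s(z))]$ into the one-point super-linear term $F(s)^{1+\gamma}$ with the correct power. The difficulty is the mismatch between the $(1+\gamma)$-moment produced by $\sigma$ and the second moment tracked by $F$, together with the off-diagonal decay of $\E[u_s(y)u_s(z)]$; controlling the latter requires the positivity of the covariance and a careful use of the near-diagonal concentration of the Riesz weight (equivalently, localizing $y,z$ to a ball of radius $\sim(t-s)^{1/\alpha}$ on which both $p_{t-s}$ and $|y-z|^{-\beta}$ are bounded below by their scaling values). Everything else — the isometry, the scaling identity, and the Bihari blow-up — is routine once this lower bound is in place.
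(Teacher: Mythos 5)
Your outer steps are fine: the Walsh isometry, the scaling identity $\int\!\!\int p_{t-s}(x-y)p_{t-s}(x-z)|y-z|^{-\beta}\,\d y\,\d z=C(t-s)^{-\beta/\alpha}$, and the final renewal analysis (a kernel $(t-s)^{-\beta/\alpha}$ with $\beta/\alpha<1$ and exponent $1+\gamma>1$ forces finite-time blow-up from \emph{any} positive baseline) are all correct, and the mechanism you identify --- $\beta<\alpha$ makes the cumulative forcing divergent, so no largeness of $\kappa$ is needed --- is exactly the paper's mechanism. The gap is the step you yourself flag as hard, and it is genuine: none of your stated tools produces $\E[\sigma(u_s(y))\sigma(u_s(z))]\gtrsim F(s)^{1+\gamma}$ with $F(s)=\inf_{x}\E|u_s(x)|^2$. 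From $\sigma\geq 0$, positivity of the covariance and Jensen you get $\E[\sigma(u_s(y))\sigma(u_s(z))]\geq\left(\E[|u_s(y)||u_s(z)|]\right)^{1+\gamma}\geq\kappa^{2(1+\gamma)}$, which is only a \emph{constant}; the bound $\E|u_s(y)|^2\geq F(s)$ lives only on the diagonal $y=z$, and there is no modulus of continuity for the two-point function $(y,z)\mapsto\E[|u_s(y)||u_s(z)|]$ that would let the "near-diagonal concentration" of the Riesz weight transfer $F(s)$ to pairs $y\neq z$ --- nor can one hope to prove such regularity here, since the quantities involved are precisely the ones about to become infinite. So your inequality does not close in $F$: what the Riesz term actually controls is the two-point correlation, not the one-point second moment, and with only the constant lower bound you obtain polynomial growth of $F$ but no way to iterate.

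This is exactly why the paper takes the two-point quantity itself, localized, as the unknown of the renewal inequality (the "crucial observation" announced in its introduction), namely $G(t):=\inf_{x,y}\E|u_t(x)u_t(y)|$ over a suitable ball. Two further issues then surface and are dealt with there. First, the scaling lower bound $c(t-s)^{-\beta/\alpha}$ for $\int\!\!\int p_{t-s}(x-z)p_{t-s}(y-w)f(z-w)\,\d z\,\d w$ holds only when $|x-y|\lesssim (t-s)^{1/\alpha}$; for fixed $x\neq y$ it saturates at $|x-y|^{-\beta}$ as $s\uparrow t$, which forces the localization to $B(0,\,t^{1/\alpha})$ and the restriction to a middle time range $s\in[(t+t_0)/2,\,3(t+t_0)/4]$ in the paper's technical proposition. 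Second, rather than closing a singular-kernel inequality, the paper converts the divergent cumulative forcing into polynomial-in-time growth of the two-point function: first $\E|u_t(x)u_t(y)|\geq c\,t^{(\alpha-\beta)/\alpha}$, then, bootstrapping once, $c\,t^{2(\alpha-\beta)/\alpha}$; a time-shift trick then treats this as an arbitrarily large effective initial datum at time $T$, after which the constant-kernel argument of Theorem \ref{theo2} finishes the proof. If you replace your $F$ by this localized two-point quantity and add these two ingredients, your outline essentially becomes the paper's proof; as written, the middle step fails.
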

It is worth noting that when $\beta\rightarrow 1$, the corresponding colored noise converges to the white noise. So the above the theorem is `consistent' with Theorem \ref{theo1}. So far our results were given under the assumption that the initial condition is bounded below by a constant. We now proceed to remove this assumption.
\begin{assumption}\label{initial}
Suppose that initial condition is non-negative and satisfies the following,
\begin{equation*}
\int_{B(0,\,1)}u_0(x)\,\d x:=K_{u_0}>0.
\end{equation*}
\end{assumption}
We have taken $B(0,\,1)$ as a matter of convenience. In the result below, $u_t$ denotes the solution to the white noise driven equation.
\begin{theorem}\label{energy-white}
Let $u_t$ be the solution to \eqref{eq:white}. Then, under Assumption \ref{initial}, there exists a $t_0\geq 0$ such that for all $t\geq t_0$ and $x\in \R$,
\begin{equation*}
\E|u_t(x)|^2=\infty\quad\text{whenever}\quad K_{u_0}\geq K,
\end{equation*}
where $K$ is some positive constant.
\end{theorem}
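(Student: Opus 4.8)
The starting point is the second-moment identity coming from Walsh's isometry. Writing $F(t,x):=\E|u_t(x)|^2$ and applying the isometry to the mild formulation \eqref{mild} (localising at the stopping time $\tau_N$ and passing to the limit $N\to\infty$ by Fatou to justify the manipulations for the local solution), the orthogonality of the deterministic and stochastic parts gives
\begin{equation*}
F(t,x)=[(\cG u)_t(x)]^2+\int_0^t\int_{\R}p_{t-s}(x-y)^2\,\E|\sigma(u_s(y))|^2\,\d y\,\d s.
\end{equation*}
By the growth bound \eqref{growth} together with Jensen's inequality, $\E|\sigma(u_s(y))|^2\geq \E|u_s(y)|^{2(1+\gamma)}\geq F(s,y)^{1+\gamma}$, so that $F$ satisfies the nonlinear integral inequality
\begin{equation}\label{plan:ineq}
F(t,x)\geq [(\cG u)_t(x)]^2+\int_0^t\int_{\R}p_{t-s}(x-y)^2\,F(s,y)^{1+\gamma}\,\d y\,\d s.
\end{equation}
This reduces the whole problem to showing that any nonnegative $F$ obeying \eqref{plan:ineq} must be infinite for $t\geq t_0$ once the source term is large enough.

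Next I would extract a quantitative lower bound on the source from Assumption \ref{initial}. Since $u_0\geq0$ and $\int_{B(0,1)}u_0=K_{u_0}$, dropping the mass outside $B(0,1)$ and invoking the sharp two-sided estimate $p_t(z)\asymp t^{-1/\alpha}\wedge t|z|^{-(1+\alpha)}$ for the $\alpha$-stable kernel (here $d=1$, $1<\alpha<2$), one gets, whenever $t\geq c(1+|x|)^\alpha$,
\begin{equation*}
(\cG u)_t(x)=\int_{\R}p_t(x-y)u_0(y)\,\d y\geq \int_{B(0,1)}p_t(x-y)u_0(y)\,\d y\geq c_1\,K_{u_0}\,t^{-1/\alpha}.
\end{equation*}
In particular $[(\cG u)_t(x)]^2\geq c_1^2K_{u_0}^2\,t^{-2/\alpha}$ on the space-time region $\cR=\{(t,x):t\geq t_0,\ |x|\leq \tfrac12 t^{1/\alpha}\}$. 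This is the analogue, in the present setting, of the constant lower bound $\kappa^2$ available in Theorem \ref{theo1}; the essential new feature is the decay $t^{-2/\alpha}$, which is exactly what forces us to demand that $K_{u_0}$ be large.

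The core of the argument is a bootstrap on \eqref{plan:ineq}. I would define the Picard lower-iterates $F_0(t,x)=[(\cG u)_t(x)]^2$ and $F_{n+1}(t,x)=\int_0^t\int_{\R} p_{t-s}(x-y)^2F_n(s,y)^{1+\gamma}\,\d y\,\d s$, so that $F\geq F_n$ for every $n$, and track bounds of the form $F_n(t,x)\geq B_n\,t^{-\beta_n}$ on $\cR$. Restricting the $s$-integral to $[t/2,t]$ and using the identity $\int_{\R}p_{t-s}(x-y)^2\,\d y=c\,(t-s)^{-1/\alpha}$ — where the integrability near $s=t$ is precisely where $\alpha>1$ enters — together with the fact that $p_{t-s}^2$ concentrates on the scale $(t-s)^{1/\alpha}\leq t^{1/\alpha}$ (so the relevant $y$ stay inside $\cR$), one is led to the recursions
\begin{equation*}
\beta_{n+1}=(1+\gamma)\beta_n-(1-\tfrac1\alpha),\qquad B_{n+1}=c\,B_n^{1+\gamma},\qquad \beta_0=\tfrac2\alpha,\quad B_0=c_1^2K_{u_0}^2.
\end{equation*}
Solving these recursions (which are linear in $\log$) gives $\log(B_n t^{-\beta_n})\sim (1+\gamma)^n\big[\,\log B_0+\tfrac{\log c}\gamma-(\beta_0-\beta_*)\log t\,\big]$ with $\beta_*=(1-\tfrac1\alpha)/\gamma$, so the iterates diverge on a (possibly $t$-dependent) window as soon as $\log B_0$, hence $K_{u_0}$, is taken large. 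This yields $F\equiv\infty$ on a set of positive space-time measure inside $\cR$. The conclusion for every $t\geq t_0$ then follows by feeding this back into \eqref{plan:ineq}: if $F=\infty$ on a positive-measure set $E$, then for any later $t$ and any $x$ one has $F(t,x)\geq\int_E p_{t-s}(x-y)^2 F(s,y)^{1+\gamma}\,\d y\,\d s=\infty$, because the kernel is strictly positive; thus the blow-up propagates forward to all $t\geq t_0$ and all $x\in\R$.

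The main obstacle is the bootstrap step. Because the nonlinearity is supercritical the recursion is doubly exponential, and one must balance the genuine growth carried by $B_n^{1+\gamma}$ against the adverse drift of the time-exponents $\beta_n$ produced by the decaying source $t^{-2/\alpha}$. Making this balance work — choosing $t_0$ and the threshold $K$ so that the window on which the iterates diverge is nonempty and meets $[t_0,\infty)$, while keeping the spatial localisation $|x|\lesssim t^{1/\alpha}$ stable under iteration — is the delicate part; by contrast the isometry, the Jensen step, the kernel lower bound, and the forward-propagation finish are comparatively routine.
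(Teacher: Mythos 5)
Your route is genuinely different from the paper's, and much of it is sound: the isometry-plus-Jensen reduction, the source bound $(\cG u)_t(x)\geq c_1K_{u_0}t^{-1/\alpha}$ on the parabolic region $\cR$, and the final forward-propagation step all work (the paper uses the same first and last steps). The gap is in the bootstrap. Your closed-form asymptotics for $\log(B_nt^{-\beta_n})$ silently assume that the constant $c$ in $B_{n+1}=cB_n^{1+\gamma}$ is uniform in $n$. That is true as long as $\beta_n\geq 0$, since then $s^{-(1+\gamma)\beta_n}\geq t^{-(1+\gamma)\beta_n}$ for $s\in[t/2,t]$ costs nothing; and indeed when $\beta_0\geq\beta_*$, i.e. $2\gamma\geq\alpha-1$, all the $\beta_n$ stay positive and your scheme can be completed. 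But when $2\gamma<\alpha-1$ (a nonempty range of $\gamma$ for every $\alpha\in(1,2)$, and the theorem assumes nothing beyond $\gamma>0$), one has $\beta_0<\beta_*$, the exponents $\beta_n\to-\infty$ geometrically, and the restriction $s\geq t/2$ then costs a factor $2^{-(1+\gamma)|\beta_n|}$ at step $n$. These factors compound to a contribution of order $-n(1+\gamma)^n$ in $\log B_n$, which overwhelms the $(1+\gamma)^n\log B_0$ gain: for fixed $t$ the iterates $B_nt^{-\beta_n}$ then tend to zero, and no divergence follows from the scheme as written. A second, more minor, flaw: for $|x|=\frac12 t^{1/\alpha}$ and $s\in[t/2,t)$ the time-$s$ section of $\cR$ has radius $\frac12 s^{1/\alpha}<|x|$, so the mass of $p_{t-s}(x-\cdot)^2$ does \emph{not} ``stay inside $\cR$''; one must instead capture a fixed fraction of mass from the ball $B(0,\frac12(t/2)^{1/\alpha})$ using the tail bound in \eqref{heat} and restricting to $s\leq 3t/4$, exactly as in the paper's proof of Theorem \ref{Theo3}. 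Both problems are repairable: stop the iteration at the first $n_0$ with $\beta_{n_0}\leq 0$ (this $n_0$ is finite and independent of $K_{u_0}$), note that $F\geq B_{n_0}t_0^{|\beta_{n_0}|}$, a constant which is a fixed positive power of $K_{u_0}$, and then switch to a renewal inequality with constant source.

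That repair is precisely the paper's strategy from the outset, and adopting it makes the exponent-tracking unnecessary. Proposition \ref{determ} restarts the solution at a fixed time $t_0$: by \eqref{mono}, scaling and the semigroup property, $(\cG u)_{t+t_0}(x)\geq cK_{u_0}$ for all $x\in B(0,1)$ and $t\in(0,t_0]$ --- your decaying bound $K_{u_0}t^{-1/\alpha}$ becomes a genuine constant because only a bounded time window is ever used. Then, writing the isometry at time $t+t_0$ and shifting time, $H(t):=\inf_{x\in B(0,1)}\E|u_{t+t_0}(x)|^2$ satisfies $H(t)\geq c_1K_{u_0}^2+c_2\int_0^tH(s)^{1+\gamma}(t-s)^{-1/\alpha}\,\d s$ for $t\leq 1$, and Proposition \ref{volterra} (an ODE comparison whose blow-up time is of order $A^{-\gamma}$) gives $H\equiv\infty$ after an arbitrarily short time once $K_{u_0}$ is large, uniformly in $\gamma$ and with no case analysis. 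The conclusion for all $x\in\R$ and $t\geq t_0$ then follows by the propagation step, which is the same in both arguments.
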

We have a similar result for the coloured noise driven equation.
\begin{theorem}\label{energy-colored}
Let $u_t$ be the solution to \eqref{eq:colored}. Then, under Assumptions \ref{color} and \ref{initial}, there exists a $t_0\geq 0$ such that for all $t\geq t_0$ and $x\in \R^d$,
\begin{equation*}
\E|u_t(x)|^2=\infty\quad\text{whenever}\quad K_{u_0}\geq K,
\end{equation*}
where $K$ is a positive constant.
\end{theorem}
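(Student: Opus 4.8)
The plan is to argue by contradiction: assume $u$ is a global random field solution, so that \eqref{moments} holds for every $T>0$, and then show that the feedback produced by the nonlinearity forces the second moment to become infinite before some finite time, contradicting finiteness. The starting point is the Walsh isometry applied to \eqref{mild}. Since $(\cG u)_t(x)$ is deterministic and the stochastic integral has mean zero, the cross term drops and
\[
\E|u_t(x)|^2=\big[(\cG u)_t(x)\big]^2+\int_0^t\!\!\int_{\R^d}\!\!\int_{\R^d}p_{t-s}(x-y)p_{t-s}(x-z)f(y,z)\,\E[\sigma(u_s(y))\sigma(u_s(z))]\,\d y\,\d z\,\d s.
\]
The growth condition \eqref{growth} gives $\sigma\geq 0$ together with $\sigma(u_s(y))\sigma(u_s(z))\geq|u_s(y)|^{1+\gamma}|u_s(z)|^{1+\gamma}$, so restricting both spatial integrals to $B(0,1)$ and invoking Assumption \ref{color} with $R=1$ to replace $f$ by the constant $K_f$ yields
\[
\E|u_t(x)|^2\geq K_f\int_0^t \E\Big[\Big(\int_{B(0,1)}p_{t-s}(x-y)|u_s(y)|^{1+\gamma}\,\d y\Big)^2\Big]\,\d s.
\]
Applying Jensen's inequality (first to the expectation of the square, then to the $(1+\gamma)$-power against the normalised kernel) turns this into a self-referential nonlinear integral inequality for $v_t:=\inf_{x\in B(0,1)}\E|u_t(x)|^2$, of the schematic form $v_t\geq c\int_0^t k(t,s)\,v_s^{1+\gamma}\,\d s$ plus a deterministic seed. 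This is precisely the feedback mechanism behind Theorem \ref{theo2}.

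The one ingredient that genuinely differs from Theorem \ref{theo2} is the lower bound on the seed, since here the initial datum is controlled only through Assumption \ref{initial} rather than by a pointwise constant $\kappa$; this is where I would import the heat-kernel argument already used for Theorem \ref{energy-white}. Using the sharp lower bound $p_t(x-y)\geq c\,t/(t^{1/\alpha}+|x-y|)^{d+\alpha}$ for the stable kernel, one has for $x,y\in B(0,1)$ and $t$ in a fixed window $[1,t_0]$ the bound $p_t(x-y)\geq c' t_0^{-d/\alpha}$, whence by \eqref{deter}
\[
(\cG u)_t(x)\geq\int_{B(0,1)}p_t(x-y)u_0(y)\,\d y\geq c' t_0^{-d/\alpha}\int_{B(0,1)}u_0(y)\,\d y=c' t_0^{-d/\alpha}K_{u_0}.
\]
Thus the deterministic contribution to $\E|u_t(x)|^2$, and hence the seed, is at least a constant multiple of $K_{u_0}^2$ uniformly for $x\in B(0,1)$ and $t$ in the window. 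Choosing $K_{u_0}\geq K$ makes this seed exceed the threshold beyond which the nonlinear inequality admits no finite global solution; the renewal/comparison argument used for Theorem \ref{theo2} then forces $v_t=\infty$ for $t\geq t_0$, contradicting the assumed finiteness and proving $\E|u_t(x)|^2=\infty$ for $x\in B(0,1)$. For arbitrary $x\in\R^d$ one repeats the estimate with $x$ held fixed: the stable kernel $p_{t-s}(x-y)$ is strictly positive for $y\in B(0,1)$, so the same inequality (with $x$-dependent constants) propagates the blow-up to every point.

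I expect the main obstacle to be the clean closing of the self-referential inequality together with the uniformity in $x$. Concretely, one must verify that the Jensen reduction legitimately produces the power $v_s^{1+\gamma}$; the bookkeeping of this exponent, and in particular the compatibility of the $(1+\gamma)$-power step with the \emph{second} moment, is the delicate point, especially when $\gamma$ is small. One must also check that the kernel arising from $\big(\int_{B(0,1)}p_{t-s}(x-y)\,\d y\big)^2$ is integrable enough near $s=t$ for the comparison lemma to apply, and quantify how the threshold $K$ depends on $t_0,\alpha,d,\gamma$ and $K_f$, so that the $t^{-d/\alpha}$ decay of the heat-kernel seed over the window is genuinely compensated by taking $K_{u_0}$ large.
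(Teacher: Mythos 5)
Your overall architecture --- a seed of order $K_{u_0}^2$ obtained from Assumption \ref{initial} via heat-kernel bounds on a time window (this is the paper's Proposition \ref{determ} together with its time-shift trick), restriction of the noise term to $B(0,\,1)\times B(0,\,1)$ so that Assumption \ref{color} replaces $f$ by $K_f$ (the paper's Proposition \ref{prop-kernel}), a nonlinear renewal inequality closed by a comparison/ODE argument (Proposition \ref{volterra}), and finally propagation of the blow-up to every $x\in\R^d$ by positivity --- is exactly the paper's. The gap is the step you yourself flag as delicate, and it is a genuine gap, not bookkeeping. After the covariance identity and the growth condition \eqref{growth}, the quantity inside the time integral is
\begin{equation*}
\int_{B(0,\,1)\times B(0,\,1)}p_{t-s}(x-y)\,p_{t-s}(x-z)\,\E\bigl[|u_s(y)u_s(z)|^{1+\gamma}\bigr]\,\d y\,\d z,
\end{equation*}
and to close an inequality for $v_t:=\inf_{x\in B(0,\,1)}\E|u_t(x)|^2$ your Jensen reduction needs, after pulling the expectation inside the square, a bound of the form $\E|u_s(y)|^{1+\gamma}\geq\bigl(\E|u_s(y)|^2\bigr)^{(1+\gamma)/2}$. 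By Jensen this holds precisely when $1+\gamma\geq 2$. For $0<\gamma<1$ the inequality runs the other way, and there is no substitute: no increasing unbounded $\varphi$ can satisfy $\E|X|^{1+\gamma}\geq \varphi\bigl(\E|X|^2\bigr)$ in general, since $\E|X|^2$ may even be infinite while $\E|X|^{1+\gamma}$ is finite. Since the standing growth assumption \eqref{growth} only requires \emph{some} $\gamma>0$, the theorem must cover small $\gamma$, and for those your self-referential inequality $v_t\geq cK_{u_0}^2+c'\int_0^t v_s^{1+\gamma}\,\d s$ cannot be derived this way; the argument stalls exactly where you predicted.

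The paper's resolution is the observation it advertises in the introduction: run the renewal inequality for the two-point correlation rather than the second moment. With $t_0$ as in Proposition \ref{determ}, set
\begin{equation*}
F(s):=\inf_{x,\,y\in B(0,\,1)}\E|u_{s+t_0}(x)u_{s+t_0}(y)|.
\end{equation*}
Then no comparison between different moments is ever needed: for the \emph{single} random variable $|u_{s+t_0}(y)u_{s+t_0}(z)|$, convexity of $r\mapsto r^{1+\gamma}$ gives, for $y,z\in B(0,\,1)$ and every $\gamma>0$,
\begin{equation*}
\E\bigl[\sigma(u_{s+t_0}(y))\sigma(u_{s+t_0}(z))\bigr]\geq\E\bigl[|u_{s+t_0}(y)u_{s+t_0}(z)|^{1+\gamma}\bigr]\geq\bigl(\E|u_{s+t_0}(y)u_{s+t_0}(z)|\bigr)^{1+\gamma}\geq F(s)^{1+\gamma}.
\end{equation*}
Combined with Propositions \ref{determ} and \ref{prop-kernel} (as in Proposition \ref{prop-colored}), this yields $F(t)\geq c_1K_{u_0}^2+c_2K_f\int_0^tF(s)^{1+\gamma}\,\d s$ on a suitable window; Proposition \ref{volterra} gives $F\equiv\infty$ past some $\tilde t$ once $K_{u_0}$ is large, and your final positivity step (which is also the paper's) converts $\E|u_t(x)u_t(y)|=\infty$ on $B(0,\,1)$ into $\E|u_t(x)|^2=\infty$ for every $x\in\R^d$. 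Replacing your $v_t$ by $F$ repairs the proof; the rest of what you wrote, including the seed estimate from Assumption \ref{initial}, goes through unchanged (the contradiction framing is unnecessary but harmless).
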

We mention that the constant $K$ appearing in the above two results might be different. The general strategy of  our method consists of obtaining non-linear renewal-type inequalities whose solutions blow up in finite time. Coming up with those inequalities can be difficult and this is where our method is novel. For the colored-noise case, a crucial observation is that one should look at the following quantity $\E|u_t(x)u_t(y)|$ instead of $\E|u_t(x)|^2$. We also need to have good control over the deterministic term $(\mathcal{G}u)_t(x)$. We make use of heat kernel estimates for short times and use the fact that we can restart the solution at a later time. All this will be made more clear in the proofs. As mentioned above, our method is soft and can be adapted to study a wider class of equations. For instance, our operator $\mathcal{L}$ can be more general. All we require is sufficient well behaved heat kernel estimates. In principle, we could also look at equations driven by discontinuous noises. 

Our final theorem, in some particular cases, extends those of \cite{chow2} and \cite{chow1}. We are going to consider the above equations in a ball  with Dirichlet boundary conditions. We will need the following slightly stronger condition on the initial condition. Fix $R>0$.  The ball $B(0,\,R)$ is going to be our domain. We will need  the following assumption.
\begin{assumption}\label{initial-dirichlet}
We assume that the initial condition $u_0$ is a non-negative function whose support, denoted by $S_{u_0}$ satisfies $B(0,\,R/2)\subset S_{u_0}$ such that $\inf_{x\in B(0,\,R/2)}u_0(x)>\tilde{\kappa}$ for some positive $\tilde{\kappa}$.
\end{assumption}

\begin{theorem}\label{Dirichlet}
Fix $R>0$ and consider
\begin{equation}\label{eq:dir}
\partial_t u_t(x)=
 \mathcal{L}u_t(x)+ \sigma (u_t(x))\dot F(t,x)\quad{t>0}\quad\text{and}\quad x\in B(0,\,R).
 \end{equation}
Here $\mathcal{L}$ is the generator of a stable process killed upon exiting the ball $B(0,\,R)$. The noise $\dot F$, when not space-time white noise is taken to be spatially colored with correlation function satisfying all the conditions stated above. Fix $\epsilon>0$, then there exist $t_0>0$ and $K>0$, such that for $K_{u_0}>K$,
\begin{equation*}
\E|u_t(x)|^2=\infty\quad\text{for all}\quad t\geq t_0 \quad\text{and}\quad x\in B(0,\,R-\epsilon).
\end{equation*}
\end{theorem}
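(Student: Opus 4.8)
The plan is to adapt the renewal-inequality scheme behind the whole-space theorems to the killed setting, the one genuinely new feature being that the Dirichlet semigroup loses mass, so every ``persistent'' lower bound available on $\R^d$ must be replaced by one that holds only on a fixed finite time window. Write the mild solution as
\[
u_t(x)=\int_{B(0,R)}p^D_t(x,y)u_0(y)\,\d y+\int_{B(0,R)}\int_0^t p^D_{t-s}(x,y)\sigma(u_s(y))\,F(\d s\,\d y),
\]
where $p^D$ is the transition density of the stable process killed on exiting $B(0,R)$, and abbreviate the first (deterministic) term by $I_t(x)$. Following the remark in the introduction, for the colored-noise equation I would track the two-point quantity
\[
G(t):=\inf_{x,x'\in B(0,\,R-\epsilon)}\E[u_t(x)u_t(x')],
\]
(for white noise the diagonal $\inf_x\E|u_t(x)|^2$ plays the same role). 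Expanding $\E[u_t(x)u_t(x')]$ with the Walsh isometry gives $I_t(x)I_t(x')$ plus a manifestly non-negative stochastic covariance involving $\E[\sigma(u_s(y))\sigma(u_s(z))]f(y,z)$; since $u\ge0$ and $\sigma(v)\ge v^{1+\gamma}$ on $[0,\infty)$, the convexity of $v\mapsto v^{1+\gamma}$ (Jensen) yields $\E[\sigma(u_s(y))\sigma(u_s(z))]\ge(\E[u_s(y)u_s(z)])^{1+\gamma}$. This is exactly the step that forces the two-point function into the estimate and produces the superlinear feedback.

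Next I would isolate the two heat-kernel inputs. First, Assumption \ref{color} ($f\ge K_f$) together with the restriction of the $y,z$-integration to $B(0,R-\epsilon)$ turns the stochastic term into
\[
K_f\int_0^t G(s)^{1+\gamma}\Big(\int_{B(0,R-\epsilon)}p^D_{t-s}(x,y)\,\d y\Big)\Big(\int_{B(0,R-\epsilon)}p^D_{t-s}(x',z)\,\d z\Big)\d s .
\]
For $x\in B(0,R-\epsilon)$, whose distance to $\partial B(0,R)$ is at least $\epsilon$, the standard interior lower estimates for the killed stable kernel give $\int_{B(0,R-\epsilon)}p^D_r(x,y)\,\d y\ge c_2>0$ uniformly for $r$ in any bounded window $[0,T]$; for the white-noise version the same estimates make the feedback kernel $\int_{B(0,R-\epsilon)}p^D_{r}(x,y)^2\,\d y$ behave like $r^{-d/\alpha}$ for short times, an integrable singularity since $d<\alpha$. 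Second, I need a lower bound on $I_t$. Here Assumption \ref{initial-dirichlet} ($u_0\ge\tilde\kappa$ on $B(0,R/2)$) and the interior positivity of $p^D$ give a fixed sub-interval $[t_1,t_2]\subset(0,\infty)$ and a constant $c_1>0$ with $I_s(x)\ge c_1\tilde\kappa$ for all $s\in[t_1,t_2]$ and $x\in B(0,R-\epsilon)$; I emphasise that, because of the mass loss, this can only be claimed on a window and not for all $t$, which is the essential departure from the whole-space proofs.

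Combining the two inputs on a fixed window $[0,T]$ yields, for $t\le T$,
\[
G(t)\ \ge\ \inf_{x,x'}I_t(x)I_t(x')\ +\ K_f c_2^2\int_0^t G(s)^{1+\gamma}\,\d s ,
\]
with $G(s)\ge c_1^2\tilde\kappa^2$ on $[t_1,t_2]$ coming from the deterministic term. Feeding the latter into the feedback integral seeds a strictly positive constant $a_0\asymp\tilde\kappa^{2(1+\gamma)}$, so that for $t\ge t_2$ the inequality reduces, after setting $\Psi(t)=a_0+\int_{t_2}^tG(s)^{1+\gamma}\,\d s$, to the differential inequality $\Psi'\ge C\,\Psi^{1+\gamma}$ with $C=C(T)>0$ and $\Psi(t_2)=a_0>0$; this explodes at the finite time $t^{\ast}=t_2+(C\gamma a_0^{\gamma})^{-1}$. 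One fixes $T$ first (say $T=2t_2$), which fixes $c_2$ and $C$, and then chooses the initial data large enough --- this is the role of the hypothesis $K_{u_0}>K$ --- so that $t^{\ast}<T$; since $t^{\ast}$ only decreases as the initial data grows, a single threshold $K$ and a single time $t_0:=t^{\ast}$ serve all larger data. To make this rigorous one runs the whole argument for the localized solution $u_{t\wedge\tau_N}$, whose moments are finite, and lets $N\to\infty$: the explosion of $\Psi$, with constants independent of $N$, is incompatible with the finiteness demanded by a global random field solution and forces $\E|u_t(x)|^2=\infty$ for $t\ge t_0$ and $x\in B(0,R-\epsilon)$.

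The main obstacle is precisely this competition between the superlinear feedback, which drives blow-up, and the dissipation caused by killing at the boundary, which depletes both the deterministic term $I_t$ and the survival weights $\int_{B(0,R-\epsilon)}p^D_r(x,\cdot)$ as $t$ grows. On $\R^d$ mass is conserved, $(\cG u)_t\ge\kappa$ for all time, and any $\kappa>0$ eventually triggers blow-up; on the ball the feedback must be made to win within a fixed window before the Dirichlet decay can dissipate it, which is why a largeness condition on the initial data is needed here even for white noise. The technical heart of the argument is therefore the uniform-on-$[0,T]$ interior heat-kernel estimates for the killed stable process, which are what allow the renewal inequality to close with constants that do not deteriorate with the localization.
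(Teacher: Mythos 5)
Your proposal is correct and follows essentially the same route as the paper's proof: both track the two-point quantity $\inf_{x,y}\E|u_t(x)u_t(y)|$ over an interior region, combine the growth condition on $\sigma$ with Jensen's inequality, use interior lower bounds for the Dirichlet heat kernel (the paper via the comparison \eqref{comp} with the free kernel and Proposition \ref{prop-kernel}) and the deterministic lower bound of Proposition \ref{determ-dirich} to close a renewal inequality on a finite time window, and then force the ODE-comparison blow-up (Proposition \ref{volterra}) to occur inside that window by taking the initial data large, feeding the result back through the Duhamel formula by positivity. Your additional stopping-time localization and the explicit white-noise variant are refinements of detail, not a different argument.
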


Here is a plan of the article. Section 2 contains some estimates and information needed for the proofs of the main results. Section 3 contains the proofs of Theorem \ref{theo1} and \ref{theo2}. Theorem \ref{Theo3} is proved in section 4.  In section 5, we present the proof of the remaining results.  We end this introduction with a few words about notation. We will denote the ball of radius $R$ by $B(0,\,R)$. For $x\in \R^d$, $|x|$ will be the magnitude of $x$. The letter $c$ with or without subscripts will denote a constant whose value is not relevent.
%As usual, the solution is given by the following integral equation,

%\begin{equation}\label{mild-dirichlet}
%u_t(x)=
%(\cG_D u)_t(x)+ \int_{B(0,\,R)}\int_0^t p_{D, t-s}(x,y)\sigma(u_s(y))F(\d s\,\d y),
%\end{equation}
%where
%\begin{equation*}
%(\cG_D u)_t(x):=\int_{B(0,\,R)} p_{D,t}(x,y)u_0(y)\,\d y,
%\end{equation*}
%and $p_{D,t}(\cdot,\,\cdot)$ denotes the Dirichlet heat kernel.
\section{Estimates}
In this section, we collect some results needed for the proof of our main results. We begin with the heat kernel of the stable process.
  \begin{itemize}
%\item \begin{equation*}
%p_t(x)=t^{-d/\alpha}p_1(t^{-1/\alpha}x).
%\end{equation*}
\item \begin{equation*}
p_{st}(x)=s^{-d/\alpha}p_t(s^{-1/\alpha}x).
\end{equation*}
\item $p_t(x)\geq p_t(y)$whenever $|x|\leq |y|$.
 \item For $t$ large enough so that $p_t(0)\leq 1$ and $\tau\geq 2$, we have
 \begin{equation}\label{mono}
p_t(\frac{1}{\tau}(x-y))\geq p_t(x)p_t(y).
\end{equation}
\end{itemize}
We provide a quick proof of the last inequality.  Suppose that $t$ is large enough so that $p(t,\,0)\leq 1$. Now, we have that $\frac{|x-y|}{\tau}\leq \frac{2|x|}{\tau}\vee \frac{2|y|}{\tau}\leq |x|\vee|y|.$ Therefore by the monotonicity property of the heat kernel and the fact that time is large enough, we have
\begin{equation*}
\begin{aligned}
p(t,\,\frac{1}{\tau}(x-y))&\geq p(t,\,|x|\vee|y|)\\
&\geq p(t,\,|x|)\wedge p(t,\,|y|)\\
&\geq p(t,\,|x|)p(t,\,|y|).
\end{aligned}
\end{equation*}
We will also need the following heat kernel estimates,
 \begin{equation}\label{heat}
 c_1\bigg(t^{-d/\alpha}\wedge \frac{t}{|x|^{d+\alpha}}\bigg)\leq p_t(x)\leq c_2\bigg(t^{-d/\alpha}\wedge \frac{t}{|x|^{d+\alpha}}\bigg).
 \end{equation}
Finally, we will need the following property; see \cite{Bogdan}.  Let $p_{D,t}(x,y)$ denote the heat kernel of the process killed upon exiting the ball $B(0,\,R)$.
\begin{itemize}
\item Fix $\epsilon >0$, then for all $t\leq \epsilon^\alpha$ and $x,y \in B(0,\,R-\epsilon)$, we have
\begin{equation}\label{comp}
p_{D,t}(x,y)\geq c_1 p_t(x-y)\quad\text{whenever}\quad x,\,y\in B(0,\,R-\epsilon).
\end{equation}
\end{itemize}

\begin{proposition}\label{determ}
Suppose that Assumption \ref{initial} holds. Then there exists a positive number $t_0$ such that for $t\in (0,\,t_0]$, we have
\begin{equation*}
(\cG u)_{t+t_0}(x)\geq c_1K_{u_0}\quad \text{whenever}\quad x\in B(0,\,1).
\end{equation*}
\end{proposition}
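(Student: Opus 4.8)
The plan is to throw away the mass of $u_0$ outside $B(0,1)$ and then bound the heat kernel from below by a single positive constant on the resulting compact space-time window, after which the estimate is immediate. First I would use that $u_0 \geq 0$ under Assumption \ref{initial} together with the representation \eqref{deter} to write
\begin{equation*}
(\cG u)_{t+t_0}(x) = \int_{\R^d} p_{t+t_0}(x-y)\,u_0(y)\,\d y \;\geq\; \int_{B(0,1)} p_{t+t_0}(x-y)\,u_0(y)\,\d y .
\end{equation*}
It then suffices to bound $p_{t+t_0}(x-y)$ below uniformly over $x,y \in B(0,1)$ by a constant depending only on $t_0, d, \alpha$.

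The key observation is that on the relevant range both the spatial and the temporal variables stay away from the degenerate regimes. Indeed, for $x,y \in B(0,1)$ one has $|x-y| \leq 2$, while for $t \in (0,t_0]$ the elapsed time satisfies $t_0 < t+t_0 \leq 2t_0$. Applying the lower bound in \eqref{heat} gives
\begin{equation*}
p_{t+t_0}(x-y) \;\geq\; c_1 \Big( (t+t_0)^{-d/\alpha} \wedge \frac{t+t_0}{|x-y|^{d+\alpha}} \Big) .
\end{equation*}
Using $t+t_0 \leq 2t_0$ in the first term, and $t+t_0 > t_0$ together with $|x-y|\leq 2$ in the second, each of the two expressions is bounded below by a strictly positive constant depending only on $t_0, d, \alpha$; hence so is their minimum. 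Denoting this constant by $c$, I obtain $p_{t+t_0}(x-y) \geq c$ for all $x,y \in B(0,1)$.

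Combining the two displays and recalling that $\int_{B(0,1)} u_0(y)\,\d y = K_{u_0}$ by Assumption \ref{initial} yields
\begin{equation*}
(\cG u)_{t+t_0}(x) \;\geq\; c \int_{B(0,1)} u_0(y)\,\d y \;=\; c\,K_{u_0},
\end{equation*}
which is the claimed inequality after renaming the constant. Any fixed $t_0>0$ serves; the sole role of $t_0$ is to keep $t+t_0$ bounded away from $0$ so that the on-diagonal term $(t+t_0)^{-d/\alpha}$ stays finite and the off-diagonal term does not vanish. I do not expect any genuine obstacle: the one point deserving care is that the heat-kernel lower bound is a minimum of two quantities, so both must be controlled, but on the compact time-space window $\{t_0 < t+t_0 \leq 2t_0,\ |x-y|\leq 2\}$ this is routine.
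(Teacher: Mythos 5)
Your proof is correct, but it takes a genuinely different and more elementary route than the paper. The paper first chooses $t_0$ large enough that $p_{t_0}(0)<1$, then factorizes the kernel via the inequality \eqref{mono} together with scaling, $p_{t_0}(x-y)\geq p_{t_0}(2x)p_{t_0}(2y)\geq 2^{-d}p_{t_0/2^\alpha}(x)p_{t_0}(2y)$, integrates the second factor against $u_0$ over $B(0,1)$ to extract $K_{u_0}$, and finally uses the semigroup property to pass from time $t_0$ to time $t_0+t$, invoking \eqref{heat} only at the last step to bound $p_{t_0/2^\alpha+t}(x)$ below on the ball. You instead discard the mass of $u_0$ outside $B(0,1)$ and bound $p_{t+t_0}(x-y)$ below \emph{uniformly} on the compact window $\{t_0<t+t_0\leq 2t_0,\ |x-y|\leq 2\}$ directly from the two-sided estimate \eqref{heat}: both branches of the minimum are bounded below by $(2t_0)^{-d/\alpha}\wedge t_0 2^{-(d+\alpha)}>0$, so the conclusion is immediate. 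Your argument is shorter, avoids \eqref{mono}, scaling and the semigroup property entirely, and works for \emph{any} fixed $t_0>0$ rather than only for $t_0$ large; the trade-off is that the paper's factorization yields the stronger intermediate bound $(\cG u)_{t_0+t}(x)\geq c\,K_{u_0}\,t_0^{-d/\alpha}\,p_{t_0/2^\alpha+t}(x)$ valid for all $x\in\R^d$, i.e.\ a kernel-shaped pointwise lower bound with spatial decay information off the unit ball, whereas your uniform-constant argument is intrinsically confined to the ball. Since the proposition only asserts the bound on $B(0,1)$, this costs nothing here.
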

\begin{proof}
We first choose $t_0$ large enough so that $p_{t_0}(0)<1$. Using \eqref{mono}, we can write
\begin{align*}
p_{t_0}(x-y)&=p_{t_0}\left(\frac{1}{2}(2x-2y)\right)\\
&\geq p_{t_0}(2x)p_{t_0}(2y)\\
&\geq 2^{-d}p_{\frac{t_0}{2^\alpha}}(x)p_{t_0}(2y).
\end{align*}
We therefore have
\begin{align*}
(\cG u)_{t_0}(x)\geq 2^{-d}p_{\frac{t_0}{2^\alpha}}(x)\int_{\R^d}p_{t_0}(2y)u_0(y)\,\d y,
\end{align*}
which after using scaling and Assumption \ref{initial} gives
\begin{align*}
(\cG u)_{t_0}(x)&\geq 2^{-d}t_0^{-d/\alpha}p_{\frac{t_0}{2^\alpha}}(x)\int_{\R^d}p_1(2yt_0^{-1/\alpha})u_0(y)\,\d y\\
&\geq c_2K_{u_0}t_0^{-d/\alpha}p_{\frac{t_0}{2^\alpha}}(x).
\end{align*}
We now use the semigroup property to obtain
\begin{align*}
(\cG u)_{t_0+t}(x)\geq c_2K_{u_0}t_0^{-d/\alpha}p_{\frac{t_0}{2^\alpha}+t}(x).
\end{align*}
Using \eqref{heat}, we obtain
\begin{align*}
(\cG u)_{t_0+t}(x)\geq c_4K_{u_0},
\end{align*}
where $c_4$ depends on $t_0$.
\end{proof}
We will need the following only for the proof of our last theorem. Set
\begin{equation*}
(\cG_Du)_t(x):=\int_{B(0,\,R)}p_{D,t}(x,y)u_0(y)\,\d y.
\end{equation*}
\begin{proposition}\label{determ-dirich}
Let $R>0$ and suppose that Assumption \ref{initial-dirichlet} holds. Then for all $x\in B(0,\,R/2)$, we have
\begin{align*}
(\cG_Du)_t(x)\geq c_1\quad \text{whenever}\quad t\leq \left(\frac{R}{2}\right)^\alpha,
\end{align*}
where $c_1$ is some positive constant.
\end{proposition}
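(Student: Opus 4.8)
The plan is to reduce the whole estimate to the free stable kernel by means of the comparison inequality \eqref{comp}, and then to extract a uniform constant from the on-diagonal part of the two-sided bound \eqref{heat}. Since $u_0\geq 0$, I would first discard the contribution of $B(0,R)\setminus B(0,R/2)$ and use Assumption \ref{initial-dirichlet} to replace $u_0$ by the constant $\tilde\kappa$ on the remaining ball, so that
\begin{equation*}
(\cG_Du)_t(x)\geq \tilde\kappa\int_{B(0,R/2)}p_{D,t}(x,y)\,\d y.
\end{equation*}
Applying \eqref{comp} with the choice $\epsilon=R/2$ (so that $B(0,R-\epsilon)=B(0,R/2)$ and the restriction $t\leq\epsilon^\alpha$ becomes exactly $t\leq(R/2)^\alpha$), this is bounded below by $\tilde\kappa c_1\int_{B(0,R/2)}p_t(x-y)\,\d y$ for every $x\in B(0,R/2)$.

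Next I would keep only the near-diagonal contribution. On the set $\{y\in B(0,R/2):|x-y|<t^{1/\alpha}\}$ one has $|x-y|^{d+\alpha}<t^{(d+\alpha)/\alpha}$, hence $t/|x-y|^{d+\alpha}>t^{-d/\alpha}$, so the minimum in the lower bound of \eqref{heat} is realised by $t^{-d/\alpha}$ and therefore $p_t(x-y)\geq c_1 t^{-d/\alpha}$ there. Restricting the integral to this set gives
\begin{equation*}
(\cG_Du)_t(x)\geq \tilde\kappa c_1^2\, t^{-d/\alpha}\,\bigl|B(x,t^{1/\alpha})\cap B(0,R/2)\bigr|,
\end{equation*}
where $\bigl|\cdot\bigr|$ denotes Lebesgue measure. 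The whole matter now rests on a purely geometric estimate for this intersection.

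The crux is the claim that $\bigl|B(x,s)\cap B(0,\rho)\bigr|\geq c_d\, s^d$ uniformly over $|x|<\rho$ and $0<s\leq\rho$, which I would apply with $\rho=R/2$ and $s=t^{1/\alpha}\leq R/2$. Granting it, the factors $t^{-d/\alpha}$ and $(t^{1/\alpha})^d$ cancel and one is left with a constant independent of $x$ and of $t\in(0,(R/2)^\alpha]$, which is precisely the assertion. I expect this volume bound to be the only genuine obstacle, since it is exactly where the position of $x$ near the boundary of $B(0,R/2)$ enters: when $x$ lies on the sphere and $s$ is small the intersection degenerates to roughly a half-ball, so one must verify that no loss of a whole power of $s$ occurs. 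I would prove it elementarily by exhibiting a sub-ball: for $|x|\geq s/2$ set $z=x-\tfrac{s}{2}\,x/|x|$, and for $|x|<s/2$ set $z=0$; in either case a short check shows $B(z,s/4)\subseteq B(x,s)\cap B(0,\rho)$, whence $\bigl|B(x,s)\cap B(0,\rho)\bigr|\geq\bigl|B(z,s/4)\bigr|=c_d\,s^d$. This avoids any compactness or scaling argument and handles the boundary case directly, completing the proof with $c_1$ taken to be $\tilde\kappa c_1^2 c_d$.
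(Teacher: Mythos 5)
Your proof is correct and follows essentially the same route as the paper: restrict the integral to $B(0,R/2)$ using $u_0\geq\tilde\kappa$ there, apply the comparison bound \eqref{comp} with $\epsilon=R/2$, keep only the near-diagonal set $\{|x-y|\leq t^{1/\alpha}\}$ where \eqref{heat} gives $p_t(x-y)\geq c\,t^{-d/\alpha}$, and conclude. The only difference is that the paper simply asserts the final inequality $\geq c_2\tilde\kappa$, while you explicitly identify and prove the geometric fact it tacitly relies on --- that $\bigl|B(x,t^{1/\alpha})\cap B(0,R/2)\bigr|\geq c_d\,t^{d/\alpha}$ uniformly even when $x$ sits near the boundary of $B(0,R/2)$ --- and your sub-ball construction verifying this is valid.
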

\begin{proof}
The proof is quite straightforward. We use Assumption \ref{initial-dirichlet} and \eqref{comp} to see that for $t\leq \left(\frac{R}{2}\right)^\alpha$, we have
\begin{align*}
(\cG_Du)_t(x)&=\int_{B(0,\,R)}p_{D,t}(x,y)u_0(y)\,\d y\\
&\geq \int_{B(0,\,R/2)}p_{D,t}(x,y)u_0(y)\,\d y\\
&\geq \int_{B(0,\,R/2)\cap\{y\in B(0,\,R/2); |x-y|\leq t^{1/\alpha} \}}p_t(x-y)u_0(y)\,\d y\\
&\geq c_2\tilde{\kappa}.
\end{align*}
\end{proof}

\begin{proposition}\label{prop-kernel}
Let $R>0$. Suppose that $t\leq \left(\frac{R}{2}\right)^\alpha$. Then for all $x_1,\,x_2 \in B(0,\,R)$, we have
\begin{equation*}
\int_{B(0,\,R)\times B(0,\,R)}p_{t-s}(x_1-y_1)p_{t-s}(x_2-y_2)f(y_1,\,y_2)\,\d y_1\d y_2\geq c_1K_{f},
\end{equation*}
where $s\leq t$ and $c_1$ is some positive constant.
\end{proposition}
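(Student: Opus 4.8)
The plan is to discard the off-diagonal behaviour of the correlation function by bounding it below through Assumption \ref{color}, and then to estimate the two resulting single integrals of the heat kernel separately. Writing $r:=t-s$ and noting that $s\leq t\leq (R/2)^\alpha$ forces $r^{1/\alpha}\leq R/2$, I would first use that $y_1,y_2\in B(0,R)$ together with Assumption \ref{color} to get $f(y_1,y_2)\geq K_f$, so that the double integral is at least
\begin{equation*}
K_f\left(\int_{B(0,R)}p_{r}(x_1-y_1)\,\d y_1\right)\left(\int_{B(0,R)}p_{r}(x_2-y_2)\,\d y_2\right).
\end{equation*}
It then suffices to show that each factor is bounded below by a constant depending only on $d$ and $\alpha$, uniformly in $r\in(0,(R/2)^\alpha]$ and $x_i\in B(0,R)$.

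For a single factor I would restrict the integral to the region $A_i:=\{y_i\in B(0,R):\ |x_i-y_i|\leq r^{1/\alpha}\}$. On $A_i$ the lower bound in \eqref{heat} is realised by its first term: since $|x_i-y_i|\leq r^{1/\alpha}$ we have $|x_i-y_i|^{d+\alpha}\leq r^{(d+\alpha)/\alpha}$, hence $r/|x_i-y_i|^{d+\alpha}\geq r^{-d/\alpha}$ and thus $p_r(x_i-y_i)\geq c_1 r^{-d/\alpha}$ on $A_i$. Consequently
\begin{equation*}
\int_{B(0,R)}p_{r}(x_i-y_i)\,\d y_i\geq c_1 r^{-d/\alpha}\,|A_i|.
\end{equation*}

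The crux is a purely geometric volume estimate: I claim $|A_i|\geq c_d\, 2^{-d} r^{d/\alpha}$ for the dimensional constant $c_d=|B(0,1)|$, after which the factors $r^{-d/\alpha}$ and $r^{d/\alpha}$ cancel and each single integral is bounded below by a constant independent of $r$, $x_i$ and $s$. To prove the volume bound, set $\rho:=r^{1/\alpha}\leq R/2$ and exhibit inside $B(x_i,\rho)\cap B(0,R)$ a full ball of radius $\rho/2$. If $|x_i|\leq\rho/2$, this ball is $B(x_i,\rho/2)$ itself, which lies in $B(0,R)$ because $|x_i|+\rho/2\leq\rho\leq R/2$. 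If $|x_i|>\rho/2$, I would push the centre radially inward to $z:=x_i\big(1-\rho/(2|x_i|)\big)$; then $B(z,\rho/2)\subset B(x_i,\rho)$ by the triangle inequality, while $|z|=|x_i|-\rho/2$ yields $B(z,\rho/2)\subset B(0,R)$. In either case $B(z,\rho/2)\subset A_i$ up to a null set, giving $|A_i|\geq c_d(\rho/2)^d$.

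Combining the three displays, each single integral is at least some $c_0>0$, so the double integral is at least $c_0^2K_f$, which is the assertion with an appropriate constant. The main obstacle is the uniformity in $x_i$: because $x_i$ may sit arbitrarily close to $\partial B(0,R)$, the ball $B(x_i,r^{1/\alpha})$ can protrude substantially out of the domain, so a naive full-ball volume bound fails. It is exactly the interior-ball construction above, together with the hypothesis $t\leq(R/2)^\alpha$ that keeps $r^{1/\alpha}$ small relative to $R$, that guarantees a fixed fraction of the heat kernel mass remains inside $B(0,R)$.
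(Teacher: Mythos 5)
Your proof is correct and takes essentially the same route as the paper's: both pull out $K_f$ via Assumption \ref{color}, restrict to the near-diagonal sets $\mathcal{A}_i=\{y_i\in B(0,R):|x_i-y_i|\leq (t-s)^{1/\alpha}\}$ where \eqref{heat} gives $p_{t-s}(x_i-y_i)\geq c(t-s)^{-d/\alpha}$, and cancel this against the volume $|\mathcal{A}_i|\asymp (t-s)^{d/\alpha}$. The only difference is that you actually prove the volume lower bound with the interior-ball construction, a step the paper merely asserts as a consequence of $t\leq\left(\frac{R}{2}\right)^\alpha$.
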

\begin{proof}
Assumption \ref{color} gives
\begin{align*}
\int_{B(0,\,R)\times B(0,\,R)}&p_{t-s}(x_1-y_1)p_{t-s}(x_2-y_2)f(y_1,\,y_2)\,\d y_1\d y_2\\
&\geq K_f\int_{B(0,\,R)\times B(0,\,R)}p_{t-s}(x_1-y_1)p_{t-s}(x_2-y_2)\,\d y_1\d y_2.
\end{align*}
We now use the fact $t\leq \left(\frac{R}{2}\right)^\alpha$ to observe that if for $i=1,\,2$, we set
\begin{equation*}
\mathcal{A}_i:=\{y_1\in B(0,\,R); |x_i-y_i|\leq (t-s)^{1/\alpha} \},
\end{equation*}
then $|\mathcal{A}_i|=c_2|t-s|^{d/\alpha}$ for some $c_2$. We therefore have
\begin{align*}
\int_{B(0,\,R)\times B(0,\,R)}&p_{t-s}(x_1-y_1)p_{t-s}(x_2-y_2)\,\d y_1\d y_2\\
&\geq \int_{\mathcal{A}_1\times \mathcal{A}_2}p_{t-s}(x_1-y_1)p_{t-s}(x_2-y_2)\,\d y_1\d y_2\\
&\geq c_3,
\end{align*}
where to obtain the last inequality, we have used the heat kernel estimates given by \eqref{heat}.  Combining the above, we have the required inequality.
\end{proof}
We now present the renewal inequalities needed to conclude to non-existence.
\begin{proposition}\label{volterra}
Fix $T>0$ and suppose that $g$ is a non-negative function satisfying the following non-linear integral inequality,
\begin{align*}
g(t)\geq A+B\int_0^t\frac{g(s)^{1+\gamma}}{(t-s)^{1/\alpha}}\,\d s,\quad \text{for}\quad 0<t\leq T,
\end{align*}
where $A$, $B$ and $\gamma$ are positive numbers. Then for any $t_0\in (0,\,T]$, there exists an $A_0$ such that for $A>A_0$
\begin{align*}
g(t)=\infty\quad\text{whenever}\quad{t\geq t_0}.
\end{align*}
\end{proposition}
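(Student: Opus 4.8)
The plan is to construct an increasing sequence of \emph{constant} lower bounds for $g$ that diverges once $A$ is large, exploiting the super-linear feedback created by the exponent $1+\gamma$ together with the singular kernel. First I would record the trivial bound $g(t)\ge A$ on $(0,T]$, which is immediate from non-negativity of the integrand ($g\ge 0$, $B>0$, kernel positive). This is the base of the induction.

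To drive the iteration I would introduce an increasing sequence of left endpoints $\sigma_n := t_0\,(1-2^{-n-1})$, so that $\sigma_0=t_0/2$ and $\sigma_n\uparrow t_0$, and prove by induction the statement $P_n$: there is a constant $m_n$ with $g(t)\ge m_n$ for all $t\in[\sigma_n,T]$. The inductive step uses only the part of the integral over $[\sigma_n,t]$: assuming $g\ge m_n$ there, for $t\ge\sigma_{n+1}$ one obtains
\begin{equation*}
g(t)\ge B\, m_n^{1+\gamma}\int_{\sigma_n}^{t}(t-s)^{-1/\alpha}\,\d s
\ge \frac{B}{1-1/\alpha}\,(\sigma_{n+1}-\sigma_n)^{1-1/\alpha}\,m_n^{1+\gamma},
\end{equation*}
where I use $\alpha>1$ so that $1-1/\alpha>0$ and $x\mapsto x^{1-1/\alpha}$ is increasing, hence minimized at $t=\sigma_{n+1}$. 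Since $\sigma_{n+1}-\sigma_n=t_0\,2^{-n-2}$, this gives a recursion of the form $m_{n+1}=D\,\lambda^{\,n}\,m_n^{1+\gamma}$ with $\lambda=2^{-(1-1/\alpha)}\in(0,1)$ and $D>0$ depending on $t_0$, $B$ and $\alpha$, and $P_{n+1}$ holds.

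Next I would analyze this recursion in the logarithmic variable $\ell_n:=\log m_n$, which linearizes it to
\begin{equation*}
\ell_{n+1}=(1+\gamma)\,\ell_n+\bigl(n\log\lambda+\log D\bigr),\qquad \ell_0=\log A.
\end{equation*}
Solving this affine recursion yields $\ell_n=C(1+\gamma)^n+pn+q$ with explicit constants $p,q$ and $C=\log A-q$. Therefore $\ell_n\to+\infty$ exactly when $C>0$, i.e.\ when $A>e^{q}=:A_0$; in that regime the homogeneous geometric term $(1+\gamma)^n$ dominates the polynomial-in-$n$ correction $pn$. Since $[\sigma_n,T]\supseteq[t_0,T]$ for every $n$, I conclude that $g(t)\ge m_n\to\infty$ for every $t\in[t_0,T]$, so $g(t)=\infty$ there, which is the assertion.

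The main obstacle is the \emph{weak singular kernel}: near the left endpoint of any interval the integral contribution is small, so a naive constant-in-time iteration on a fixed interval stalls and fails to improve. The device of a strictly increasing sequence of left endpoints $\sigma_n$ circumvents this by always charging the integral over a window of definite length $\sigma_{n+1}-\sigma_n$, at the price of a geometric factor $\lambda^{\,n}$. The entire argument then rests on verifying that the super-linear growth $(1+\gamma)^n$ produced by the nonlinearity still beats this loss, which it does precisely when $A$ exceeds the threshold $A_0$; fixing the admissible form of $p$, $q$, $D$, $\lambda$ is routine once the window lengths are chosen.
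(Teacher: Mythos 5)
Your proof is correct, but it follows a genuinely different route from the paper. The paper's argument is to flatten the kernel -- since $0<t-s\le T$, one has $(t-s)^{-1/\alpha}\ge T^{-1/\alpha}$ -- which reduces the inequality to the autonomous form $g(t)\ge A+BT^{-1/\alpha}\int_0^t g(s)^{1+\gamma}\,\d s$; a comparison principle then pits $g$ against the separable ODE $g'=BT^{-1/\alpha}g^{1+\gamma}$, $g(0)=A$, whose solution blows up at the explicit time $t^\ast=T^{1/\alpha}/(A^\gamma B\gamma)$, and taking $A>\bigl(T^{1/\alpha}/(B\gamma t_0)\bigr)^{1/\gamma}$ forces $t^\ast<t_0$. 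Your argument instead runs a bootstrap over the dyadic windows $[\sigma_n,T]$, producing the recursion $m_{n+1}=D\lambda^n m_n^{1+\gamma}$ and deciding divergence by linearizing in $\ell_n=\log m_n$; this is the classical iteration scheme from the blow-up literature. What each buys: the paper's proof is shorter and yields an explicit blow-up time and threshold $A_0$, and it works for any $\alpha>0$ since the singular kernel is simply bounded below by a constant; your proof needs $1/\alpha<1$ so the window integrals converge (true in every application in the paper, where $\alpha\in(1,2)$ or the kernel is absent), but it is entirely elementary -- no comparison principle is invoked -- and it sidesteps the paper's slightly loose step asserting that $g$ itself ``is increasing'' (only the comparison solution is monotone; $g$ merely satisfies an inequality). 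Two small remarks: your claim that $\ell_n\to+\infty$ ``exactly when $C>0$'' should read $C\ge 0$, since the particular solution has slope $p=-\log\lambda/\gamma>0$, but as a sufficient condition for blow-up this is harmless; and your conclusion is stated on $[t_0,T]$ rather than $[t_0,\infty)$, which is in fact all the hypothesis supports (the paper's statement nominally claims more than the assumed range $0<t\le T$ of the inequality can deliver) and is all that is used downstream.
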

\begin{proof}
From the integral inequality, we have
\begin{align*}
g(t)\geq A+\frac{B}{T^{1/\alpha}}\int_0^tg(s)^{1+\gamma}\,\d s,\quad \text{for}\quad t\leq T.
\end{align*}
By the comparison principle, it is enough to consider
\begin{align*}
g(t)=A+\frac{B}{T^{1/\alpha}}\int_0^tg(s)^{1+\gamma}\,\d s,\quad \text{for}\quad t\leq T,
\end{align*}
which amounts to looking at the following non-linear ordinary differential equation,
\begin{equation*}
\frac{g'(t)}{g(t)^{1+\gamma}}=\frac{B}{T^{1/\alpha}},
\end{equation*}
with initial condition $A$. The solution of this equation is given by
\begin{equation*}
\frac{1}{g(t)^{\gamma}}=\frac{1}{A^\gamma}-\frac{\gamma Bt}{T^{1/\alpha}} \quad\text{for}\quad t\leq T.
\end{equation*}
Hence by blowup occur at $t=\frac{T^{1/\alpha}}{A^\gamma B\gamma}$. Fix any $t_0<T$ and take $A>\left(\frac{T^{1/\alpha}}{B\gamma t_0}\right)^{1/\gamma}$, we obtain blow-up before time $t_0$ and since $g(t)$ is increasing on $(0,\infty)$, we have the required result.
\end{proof}
We have the following result which is slightly different from the previous one.
\begin{proposition}\label{rem-volterra}
Suppose that $0<(1+\gamma)/\alpha<1$. If $f$ is a non-negative function satisfying the following non-linear integral inequality,
\begin{align*}
g(t)\geq A+B\int_0^t\frac{g(s)^{1+\gamma}}{(t-s)^{1/\alpha}}\,\d s,\quad \text{for}\quad t>0
\end{align*}
where $A$, $B$ and $\gamma$ are positive numbers, then for any $A>0$ there exists $t_0>0$ such that $g(t)=\infty$ for all $t\geq t_0$.
\end{proposition}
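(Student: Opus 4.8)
The plan is to deduce the statement from Proposition~\ref{volterra} by exploiting the scale invariance of the inequality; the extra hypothesis $(1+\gamma)/\alpha<1$ enters only to guarantee that $\alpha>1$, which is precisely what makes the rescaling \emph{increase} the additive constant.

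First I would record a scaling identity. Fix $\lambda>0$, set $a:=(1-1/\alpha)/\gamma$, and define $h(t):=\lambda^{a}g(\lambda t)$. Evaluating the hypothesised inequality at time $\lambda t$ and substituting $s=\lambda r$ gives
\begin{equation*}
g(\lambda t)\geq A+B\lambda^{1-1/\alpha}\int_0^t (t-r)^{-1/\alpha}g(\lambda r)^{1+\gamma}\,\d r.
\end{equation*}
Writing $g(\lambda r)=\lambda^{-a}h(r)$ and multiplying through by $\lambda^{a}$, the power of $\lambda$ multiplying the integral becomes $a+1-1/\alpha-a(1+\gamma)=1-1/\alpha-a\gamma$, which vanishes by the choice of $a$. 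Hence $h$ satisfies the very same inequality as $g$, but with the constant $A$ replaced by $\lambda^{a}A$:
\begin{equation*}
h(t)\geq \lambda^{a}A+B\int_0^t (t-r)^{-1/\alpha}h(r)^{1+\gamma}\,\d r,\qquad t>0.
\end{equation*}

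Next, since $(1+\gamma)/\alpha<1$ forces $\alpha>1+\gamma>1$, the exponent $a=(1-1/\alpha)/\gamma$ is strictly positive. Fix any $A>0$ and apply Proposition~\ref{volterra} to $h$ on a fixed interval, say with $T=1$ and $t_0=\tfrac12$, where the blow-up threshold is $A_0=(2/(B\gamma))^{1/\gamma}$, a fixed number independent of $\lambda$. Because $a>0$, I may choose $\lambda$ so large that $\lambda^{a}A>A_0$; Proposition~\ref{volterra} then yields $h(t)=\infty$ for all $t\geq\tfrac12$. Unwinding the definition of $h$ gives $g(\lambda t)=\infty$ for all $t\geq\tfrac12$, i.e. $g(\tau)=\infty$ for all $\tau\geq\lambda/2$, which is the claim with $t_0:=\lambda/2$.

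The only genuinely delicate point is the scaling step: one must pin down the exponent $a$ that renders the convolution term exactly scale-invariant, and confirm that the \emph{inequality} (not an equality) survives the rescaling, which is immediate since $h\geq0$ and every manipulation is monotone. I expect this to be the crux; everything afterwards is a direct invocation of Proposition~\ref{volterra}. A more hands-on alternative avoids scaling entirely: iterate the inequality from the trivial bound $g\geq A$, producing lower bounds $g(t)\geq a_n t^{p_n}$ with $p_{n+1}=(1+\gamma)p_n+1-1/\alpha$ and $a_{n+1}=Ba_n^{1+\gamma}\,\mathrm{Beta}\!\left(p_n(1+\gamma)+1,\,1-1/\alpha\right)$; there the obstacle shifts to controlling the accumulated Beta-function factors and showing that $a_n t^{p_n}\to\infty$ once $t$ exceeds a threshold of order $A^{-\gamma/(1-1/\alpha)}$.
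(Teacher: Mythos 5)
Your proposal is correct, but it takes a genuinely different route from the paper's. The paper argues directly: it bounds the kernel below by $(t-s)^{-1/\alpha}\geq t^{-1/\alpha}$, substitutes $h(t)=t^{1/\alpha}g(t)$, and compares with the explicit ODE $h'(t)/h(t)^{1+\gamma}=Bt^{-(1+\gamma)/\alpha}$ with $h(1)=A$, whose solution blows up in finite time precisely because $(1+\gamma)/\alpha<1$ makes $\int_1^\infty s^{-(1+\gamma)/\alpha}\,\d s$ diverge; that divergence is where the hypothesis enters. You instead exploit the exact scale invariance of the convolution inequality: $h(t)=\lambda^{a}g(\lambda t)$ with $a=(1-1/\alpha)/\gamma$ satisfies the same inequality with $A$ replaced by $\lambda^{a}A$, and since $a>0$ you can push the constant above the fixed threshold of Proposition \ref{volterra} and read off blow-up by time $\lambda/2$. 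Your exponent bookkeeping is right ($a+1-1/\alpha-a(1+\gamma)=0$), the threshold $A_0=(2/(B\gamma))^{1/\gamma}$ for $T=1$, $t_0=\tfrac12$ matches the paper's proof of Proposition \ref{volterra}, and the extension of the conclusion past $t=T$ is harmless here because your inequality for $h$ holds for all $t>0$, so infiniteness propagates through the convolution term. What your approach buys: it only ever uses $\alpha>1$ (as you observe, the hypothesis $(1+\gamma)/\alpha<1$ serves solely to make $a>0$), so it proves in one stroke the stronger statement recorded in the Remark following Proposition \ref{rem-volterra} — the case $(1+\gamma)/\alpha\geq 1$ — which the paper handles by a separate trick (writing $\gamma=\gamma_0+(\gamma-\gamma_0)$ and absorbing $A^{\gamma-\gamma_0}$ into $B$); it also makes the dependence of the blow-up time on $A$ explicit, of order $A^{-\gamma/(1-1/\alpha)}$. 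What the paper's route buys: it is self-contained rather than leaning on the finite-window proposition, and it displays transparently how the stated hypothesis controls the ODE comparison.
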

\begin{proof}
We start with the following consequence of the the inequality.
\begin{align*}
g(t)\geq A+B\int_0^t\frac{g(s)^{1+\gamma}}{t^{1/\alpha}}\,\d s,\quad \text{for}\quad t>0.
\end{align*}
We can always assume that $t_0>1$. So after setting $h(t):=t^{1/\alpha}g(t)$, the above reduces to
\begin{align*}
h(t)\geq At^{1/\alpha}+B\int_0^t\frac{h(s)^{1+\gamma}}{s^{(1+\gamma)/\alpha}}\,\d s,\quad \text{for}\quad t>0.
\end{align*}
We will take $t\geq1$,
then we get 
\begin{align*}
h(t)\geq A+B\int_1^t\frac{h(s)^{1+\gamma}}{s^{(1+\gamma)/\alpha}}\,\d s,\quad \text{for}\quad t\geq1.
\end{align*}
 By the comparison principle, we need to look at the following ordinary differential equation,
\begin{equation*}
\frac{h'(t)}{[h(t)]^{1+\gamma}}=\frac{B}{t^{(1+\gamma)/\alpha}}, \quad\text{for}\quad t\geq 1
\end{equation*}
with initial condition $h(1)=A$. This can be explicitly solved to conclude that because $(1+\gamma)/\alpha<1$, we have $h(t)=\infty$ for all $t\geq t_0$.
\end{proof}
\begin{remark}The same conclusion holds for $(1+\gamma)/\alpha\geq1$. This is because we can always write $\gamma=\gamma_0+(\gamma-\gamma_0)$ so that $\gamma_0<\gamma$ and $(1+\gamma_0)/\alpha<1$. We now use the fact that $g(t)>A$ for all $t>0$ and use the integral inequality to come up with
\begin{align*}
g(t)\geq A+BA^{\gamma-\gamma_0}\int_0^t\frac{g(s)^{1+\gamma_0}}{(t-s)^{1/\alpha}}\,\d s,\quad \text{for}\quad t>0.
\end{align*}
The conclusion now easily follows from this and Proposition \ref{rem-volterra}.
\end{remark}

%\section{Proof of the main results}
\section{Proof of Theorems \ref{theo1} and \ref{theo2}}
\begin{proof}[\bf Proof of Theorem \ref{theo1}.]
We begin with the mild formulation of the solution i.e \eqref{mild}, take second moment and use the Walsh isometry to obtain
\begin{align*}
\E|u_t(x)|^2&=|(\cG u)_t(x)|^2+\int_0^t\int_\R p^2_{t-s}(x-y)\E|\sigma(u_s(y))|^2\,\d y\,\d s\\
&=I_1+I_2.
\end{align*}
The fact that the initial condition is bounded below gives
\begin{align*}
I_1\geq \kappa^2.
\end{align*}
A little more work allows to bound $I_2$ as follows
\begin{align*}
I_2&\geq \int_0^t\left(\inf_{x\in \R}\E|u_t(x)|^2\right)^{\1+\gamma}\int_\R p^2_{t-s}(x-y)\,\d y\,\d s\\
&\geq c_2\int_0^t\left(\inf_{x\in \R}\E|u_s(x)|^2\right)^{\1+\gamma}\frac{1}{(t-s)^{1/\alpha}}\d s.
\end{align*}
Note that to obtain the inequalities, we have used the growth condition on $\sigma$ and the heat kernel estimates \eqref{heat}.
Upon setting
\begin{align*}
F(s):=\inf_{x\in \R}\E|u_s(x)|^2
\end{align*}
and combining the above inequalities, we obtain
\begin{equation*}
F(t)\geq c_3+c_4\int_0^t\frac{F(s)^{1+\gamma}}{(t-s)^{1/\alpha}}\,\d s.
\end{equation*}
The result  now follows from the argument of Proposition \ref{rem-volterra}.
\end{proof}

The proof of Theorem \ref{theo2} will require a new idea. Instead of the looking for a non-linear renewal inequality which involves the second moment, we look at one which involve a different quantity. The proof of the following proposition makes this more precise.
\begin{proposition}\label{prop-colored}
There exists a $t_0>0$ and $\kappa_0>0$ such that whenever the lower bound of $u_0$ in \eqref{minumum-values-u0} satisfies $\kappa>\kappa_0$, then  for all $x,\,y\in B(0,\,1)$,
\begin{align*}
\E|u_t(x)u_t(y)|=\infty\quad\text{for all}\quad t\geq t_0.
\end{align*}
\end{proposition}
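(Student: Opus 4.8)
The plan is to run a nonlinear renewal argument not on $\E|u_t(x)|^2$ but on the two-point function $\E[u_t(x)u_t(y)]$, since the spatial correlation of $\dot F$ genuinely couples two distinct points. First I would write the mild formulation of \eqref{eq:colored} at $x$ and at $y$, multiply, and take expectations. Because $(\cG u)_t(\cdot)$ is deterministic the mixed deterministic–stochastic terms vanish, and the covariance formula (Walsh isometry) for the colored stochastic integral gives
\begin{align*}
\E[u_t(x)u_t(y)]=(\cG u)_t(x)(\cG u)_t(y)+\int_0^t\!\int_{\R^d\times\R^d}p_{t-s}(x-z)p_{t-s}(y-w)\,\E[\sigma(u_s(z))\sigma(u_s(w))]\,f(z,w)\,\d z\,\d w\,\d s.
\end{align*}
Since $u_0\geq\kappa$ and $p_t$ integrates to $1$, the first term is at least $\kappa^2$; since $f\geq0$ and $\sigma\geq0$ the second term is non-negative, so $\E[u_t(x)u_t(y)]$ is a well-defined element of $[\kappa^2,\infty]$.

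Next I would close the inequality in the scalar quantity $G(s):=\inf_{z,w\in B(0,1)}\E[u_s(z)u_s(w)]$. The growth hypothesis \eqref{growth} gives $\sigma(u_s(z))\sigma(u_s(w))\geq(|u_s(z)u_s(w)|)^{1+\gamma}$ pointwise, and since $r\mapsto r^{1+\gamma}$ is convex, Jensen's inequality yields $\E[\sigma(u_s(z))\sigma(u_s(w))]\geq(\E|u_s(z)u_s(w)|)^{1+\gamma}\geq G(s)^{1+\gamma}$. Restricting the spatial integral to $B(0,1)\times B(0,1)$ (valid because the integrand is non-negative) and applying Proposition \ref{prop-kernel} with $R=1$ to bound the remaining kernel integral below by $c_1K_f$, I obtain, for $t\leq(1/2)^\alpha$,
\begin{align*}
G(t)\geq\kappa^2+c_1K_f\int_0^tG(s)^{1+\gamma}\,\d s.
\end{align*}
This is exactly the reduced inequality that appears inside the proof of Proposition \ref{volterra} (with the singular kernel already collapsed to the constant $c_1K_f$), so the same ODE comparison applies: the solution of $g'=c_1K_fg^{1+\gamma}$, $g(0)=\kappa^2$, blows up at $t^\ast=(\gamma c_1K_f\kappa^{2\gamma})^{-1}$. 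Fixing $t_0\in(0,(1/2)^\alpha)$ and choosing $\kappa_0:=(\gamma c_1K_ft_0)^{-1/(2\gamma)}$, I get $t^\ast<t_0$ for every $\kappa>\kappa_0$, whence $G(t_0)=\infty$; that is, $\E[u_{t_0}(z)u_{t_0}(w)]=\infty$ for all $z,w\in B(0,1)$.

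To reach all $t\geq t_0$ (in particular beyond the range $t\leq(1/2)^\alpha$ where the renewal inequality was derived) I would restart the equation at time $t_0$. Writing the solution from $t_0$ through the mild/Markov formulation, the purely deterministic contribution to $\E[u_t(x)u_t(y)]$ is
\begin{align*}
\int_{\R^d\times\R^d}p_{t-t_0}(x-z)\,p_{t-t_0}(y-w)\,\E[u_{t_0}(z)u_{t_0}(w)]\,\d z\,\d w=\infty,
\end{align*}
because the inner two-point function is infinite on $B(0,1)\times B(0,1)$ and the heat kernels are strictly positive; the mixed terms vanish after conditioning on $\cF_{t_0}$ and the remaining stochastic term is non-negative. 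Hence $\E[u_t(x)u_t(y)]=\infty$, and since $|u_t(x)u_t(y)|\geq u_t(x)u_t(y)$ we conclude $\E|u_t(x)u_t(y)|=\infty$ for all $t\geq t_0$ and all $x,y$, which is the claim.

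The hard part is, I expect, twofold. Conceptually the decisive step is the passage to the two-point function together with the Jensen bound $\E[\sigma(u_s(z))\sigma(u_s(w))]\geq G(s)^{1+\gamma}$: this is precisely what lets the two-point spatial correlation be absorbed into a single closed scalar inequality, and it has no counterpart in the white-noise proof of Theorem \ref{theo1}. Technically, the care lies in justifying the renewal/ODE comparison for the merely measurable infimum $G$ (most cleanly by assuming $G$ finite on $[0,t_0]$ and deriving a contradiction) and in making the restart rigorous — in particular the vanishing of the mixed deterministic–stochastic terms under conditioning on $\cF_{t_0}$ and the integrability required to split the product expectation.
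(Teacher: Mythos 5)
Your proposal is correct and follows essentially the same route as the paper: lower-bound the two-point quantity $\E|u_t(x)u_t(y)|$ via the mild formulation, use the growth condition \eqref{growth} together with Jensen's inequality and Proposition \ref{prop-kernel} (with $R=1$, $t\leq(1/2)^\alpha$) to close a renewal inequality $G(t)\geq\kappa^2+c_1K_f\int_0^tG(s)^{1+\gamma}\,\d s$ for $G(s)=\inf_{z,w\in B(0,1)}\E|u_s(z)u_s(w)|$, and then invoke the ODE comparison from Proposition \ref{volterra} to force blow-up before $t_0$ once $\kappa$ is large. Your explicit computation of the blow-up time and of $\kappa_0$, and your restart argument extending the conclusion past $t\leq(1/2)^\alpha$ (which the paper leaves implicit, and which can alternatively be handled, avoiding the Fubini subtlety, by restricting the time integral in the global mild formulation to $[t^\ast,(1/2)^\alpha]$ where the integrand is already infinite), are refinements of, not departures from, the paper's argument.
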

\begin{proof}
We start off with \eqref{mild} to obtain
\begin{align*}
\E|&u_t(x)u_t(y)|\\
&\geq \cG u_t(x)\cG u_t(y)+\int_0^t\int_{\R^d\times\R^d}p_{t-s}(x-z)p_{t-s}(y-w)f(z,w)\left(\E|u_s(z)u_s(w)|\right)^{1+\gamma}\,\d z\d w\d s\\
&:=I_1+I_2.
\end{align*}
We look at the term $I_1$ first. The fact that the initial condition is bounded below by $\kappa$ gives
\begin{equation*}
I_1\geq \kappa^2.
\end{equation*}
We now assume that $t<\left(\frac{1}{2}\right)^\alpha$ and use Proposition \ref{prop-kernel} to obtain
\begin{align*}
I_2&\geq \int_0^t\left(\inf_{z,\,w\in B(0,\,1)}\E|u_s(z)u_s(w)|\right)^{1+\gamma}\int_{B(0,\,1)\times B(0,\,1)}p_{t-s}(x-z)p_{t-s}(y-w)f(z,w)\,\d z\d w\d s\\
&\geq c_1K_f\int_0^t\left(\inf_{z,\,w\in B(0,\,1)}\E|u_s(z)u_s(w)|\right)^{1+\gamma} \d s.
\end{align*}
We now set
\begin{equation*}
G(s):=\inf_{x,\,y\in B(0,\,1)}\E|u_s(x)u_s(y)|.
\end{equation*}
We combine the estimates above to obtain
\begin{align*}
G(t)\geq \kappa^2+K_f\int_0^tG(s)^{1+\gamma}\,\d s \quad\text{for}\quad t\leq\left(\frac{1}{2}\right)^{\alpha}.
\end{align*}
The proof of Proposition \ref{volterra} implies that by taking $\kappa$ big enough, we can make sure that $t_0$ is as small as we wish. This completes the proof of the result.
\end{proof}

\begin{proof}[\bf Proof of Theorem \ref{theo2}]
With the above proposition at our disposal, we can readily prove the theorem. Indeed, from the mild formulation, we have
\begin{align*}
\E|&u_t(x)|^2\\
&\geq \kappa^2+\int_{t_0}^t\int_{B(0,\,1)\times B(0,\,1)}p_{t-s}(x-y)p_{t-s}(x-w)f(y,w)\left(\E||u_s(y)u_s(w)|\right)^{1+\gamma}\d y\d w\d s.
\end{align*}
The result now follows from the fact that all the functions involved on the right of the last inequality are strictly positive.
\end{proof}

\section{Proof of Theorem \ref{Theo3}}
In this section, we will always be assuming that the correlation function is given by the Riesz kernel, that is
\begin{align*}
f(z)=\frac{1}{|z|^\beta},\ \ \ \beta< d\wedge \alpha.
\end{align*}
$u_t$ will be the solution to \eqref{eq:colored}.
\begin{proposition}
There exists a constant $c_1$ such that
\begin{align*}
\int_{\R^d\times\R^d}p_t(x-z)p_t(y-w)f(z-w)\,\d z\d w\geq \frac{c_1}{t^{\beta/\alpha}}\quad\text{whenever}\quad x,\,y\in B(0,\,t^{1/\alpha}).
\end{align*}
\end{proposition}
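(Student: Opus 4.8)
The plan is to exploit the scaling structure of the stable heat kernel to reduce the claim to a uniform lower bound at time $t=1$. First I would change variables, setting $z=t^{1/\alpha}z'$ and $w=t^{1/\alpha}w'$. Using the scaling identity $p_t(u)=t^{-d/\alpha}p_1(t^{-1/\alpha}u)$ (the $t_0=1$ instance of the first bullet in Section 2) together with the homogeneity $|z-w|^{-\beta}=t^{-\beta/\alpha}|z'-w'|^{-\beta}$, a routine count of the powers of $t$ (two factors $t^{-d/\alpha}$ from the kernels, one factor $t^{-\beta/\alpha}$ from the Riesz kernel, and two Jacobian factors $t^{d/\alpha}$, netting $t^{-\beta/\alpha}$) collapses everything to
\begin{equation*}
\int_{\R^d\times\R^d}p_t(x-z)p_t(y-w)f(z-w)\,\d z\,\d w = t^{-\beta/\alpha}\,\Phi(\tilde x,\tilde y),
\end{equation*}
where $\tilde x:=t^{-1/\alpha}x$, $\tilde y:=t^{-1/\alpha}y$ and
\begin{equation*}
\Phi(a,b):=\int_{\R^d\times\R^d}p_1(a-z)p_1(b-w)\frac{1}{|z-w|^\beta}\,\d z\,\d w.
\end{equation*}
The hypothesis $x,y\in B(0,t^{1/\alpha})$ translates exactly into $\tilde x,\tilde y\in B(0,1)$, so it remains to show $\Phi(a,b)\geq c_1$ uniformly for $a,b\in B(0,1)$.

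For the second step I would obtain this uniform bound by restricting the domain of integration rather than invoking a continuity/compactness argument. Fix the region $z,w\in B(0,2)$. For $a\in B(0,1)$ and $z\in B(0,2)$ one has $|a-z|\leq 3$, so the lower estimate in \eqref{heat} (at $t=1$) gives $p_1(a-z)\geq c_1(1\wedge 3^{-(d+\alpha)})$, a positive constant independent of $a$, and likewise $p_1(b-w)$ is bounded below. Hence
\begin{equation*}
\Phi(a,b)\geq c\int_{B(0,2)\times B(0,2)}\frac{1}{|z-w|^\beta}\,\d z\,\d w=:c_1,
\end{equation*}
and the right-hand side is a strictly positive constant not depending on $a,b$, which is exactly what is needed.

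The only point requiring care is the finiteness and positivity of this last double integral, and this is where the standing assumption $\beta<d$ enters: for each fixed $z$ the inner integral $\int_{B(0,2)}|z-w|^{-\beta}\,\d w$ is dominated by $\int_{|u|\leq 4}|u|^{-\beta}\,\d u$, which converges precisely because $\beta<d$, and integrating the resulting bounded positive function over the bounded set $z\in B(0,2)$ yields a finite positive number. I expect no serious obstacle here; the argument is essentially bookkeeping of the scaling exponents plus the elementary local integrability of a Riesz kernel with $\beta<d$. It is worth noting that only $\beta<d$ is used in this estimate, the companion restriction $\beta<\alpha$ playing no role at this step.
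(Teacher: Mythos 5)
Your proof is correct and takes essentially the same approach as the paper: the paper restricts the integral to $B(0,\,t^{1/\alpha})\times B(0,\,t^{1/\alpha})$ and applies the lower bound in \eqref{heat} directly at time $t$, which, after your change of variables, is precisely your restriction to a fixed ball at time $1$. Your scaling step and the remark that only $\beta<d$ (local integrability of the Riesz kernel) is needed simply make explicit the bookkeeping that the paper's one-line proof leaves implicit.
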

\begin{proof}
The proof is quite straightforward. We use the bounds given by \eqref{heat} to write
\begin{align*}
\int_{\R^d\times\R^d}p_t(x-z)&p_t(y-w)f(z-w)\,\d z\d w\\
&\geq \int_{B(0,\,t^{1/\alpha})\times B(0,\,t^{1/\alpha})}p_t(x-z)p_t(y-w)f(z-w)\,\d z\d w\\
&\geq \frac{c_1}{t^{\beta/\alpha}}.
\end{align*}
\end{proof}
The following result is now straightforward.
\begin{proposition}\label{correlation-lower-bound}
Fix $t>0$. For $x,\,y\in B(0,\,t^{1/\alpha})$, we have
\begin{align*}
\E|u_t(x)u_t(y)|\geq c_1t^{(\alpha-\beta)/\alpha},
\end{align*}
where $c_1$ is some constant.
\end{proposition}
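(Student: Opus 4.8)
The plan is to substitute a crude uniform lower bound for $\E|u_s(z)u_s(w)|$ into the nonlinear term of the mild formulation and then read off the correct power of $t$ from the spatial integral, exactly along the lines of the preceding proposition.

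First I would record the inequality already established in the proof of Proposition \ref{prop-colored}:
\[
\E|u_t(x)u_t(y)| \ge (\cG u)_t(x)(\cG u)_t(y) + \int_0^t\!\!\int_{\R^d\times\R^d} p_{t-s}(x-z)p_{t-s}(y-w) f(z-w)\bigl(\E|u_s(z)u_s(w)|\bigr)^{1+\gamma}\,\d z\,\d w\,\d s.
\]
Since $\inf_{x\in\R^d}u_0(x)=\kappa>0$ and $\int_{\R^d}p_s=1$, we have $(\cG u)_s(z)\ge\kappa$ for every $s$ and $z$. Dropping the nonnegative double integral in the displayed inequality (at time $s$) therefore yields the uniform baseline bound $\E|u_s(z)u_s(w)|\ge(\cG u)_s(z)(\cG u)_s(w)\ge\kappa^2$, valid for all $s>0$ and all $z,w\in\R^d$.

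Next I would discard the deterministic term and feed the baseline bound into the integrand, obtaining
\[
\E|u_t(x)u_t(y)| \ge \kappa^{2(1+\gamma)}\int_0^t\!\!\int_{\R^d\times\R^d} p_{t-s}(x-z)p_{t-s}(y-w) f(z-w)\,\d z\,\d w\,\d s.
\]
It then remains only to bound the inner spatial integral from below. Here I would restrict the time integration to $s\in[0,t/2]$, so that $t-s\asymp t$, and then run the computation behind the preceding proposition with the region $B(0,t^{1/\alpha})\times B(0,t^{1/\alpha})$: for $x\in B(0,t^{1/\alpha})$ and $z\in B(0,t^{1/\alpha})$ one has $|x-z|\le 2t^{1/\alpha}$, and since $t-s\ge t/2$ the estimate \eqref{heat} gives $p_{t-s}(x-z)\ge c\,t^{-d/\alpha}$, and likewise for $p_{t-s}(y-w)$. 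Combining this with the scaling identity $\int_{B(0,t^{1/\alpha})^2}|z-w|^{-\beta}\,\d z\,\d w = c\,t^{(2d-\beta)/\alpha}$ (finite because $\beta<d$) produces the spatial lower bound $c\,t^{-\beta/\alpha}$, uniformly in $s\in[0,t/2]$.

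Finally, integrating this over $s\in[0,t/2]$ contributes a factor $t/2$, so that
\[
\E|u_t(x)u_t(y)| \ge \kappa^{2(1+\gamma)}\cdot c\,t^{-\beta/\alpha}\cdot\tfrac{t}{2} = c_1\,t^{(\alpha-\beta)/\alpha},
\]
which is the asserted bound. The only delicate point is the spatial integral: one must take the ball on which both heat kernels are uniformly of size $t^{-d/\alpha}$ at the diffusive scale $t^{1/\alpha}$ corresponding to the \emph{final} time (rather than at scale $(t-s)^{1/\alpha}$), so that the hypothesis $x,y\in B(0,t^{1/\alpha})$ can genuinely be used; restricting to $s\le t/2$ keeps $t-s$ comparable to $t$ and prevents the heat-kernel lower bound from degrading. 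Everything else is a direct substitution.
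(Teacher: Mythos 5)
Your proof is correct, and its skeleton is the same as the paper's: establish the uniform baseline $\E|u_s(z)u_s(w)|\ge \kappa^2$ from the deterministic part of the mild formulation, feed it into the nonlinear term, and lower-bound the resulting space--time integral of the two heat kernels against the Riesz kernel. Where you differ is in how that integral is estimated, and the difference is substantive. The paper changes variables $s\mapsto t-s$ and then invokes its preceding proposition at every time $s\in(0,t)$, yielding $\int_0^t s^{-\beta/\alpha}\,\d s \asymp t^{(\alpha-\beta)/\alpha}$; but that proposition's hypothesis is $x,y\in B(0,\,s^{1/\alpha})$, which, given only $x,y\in B(0,\,t^{1/\alpha})$, fails for small $s$, so the paper's pointwise-in-$s$ application is, strictly speaking, not justified on the whole interval. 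Your variant --- restricting to $s\in[0,t/2]$ so that $t-s\asymp t$, bounding both kernels below by $c\,t^{-d/\alpha}$ on $B(0,\,t^{1/\alpha})\times B(0,\,t^{1/\alpha})$ via \eqref{heat}, and using the scaling identity $\int_{B(0,\,t^{1/\alpha})^2}|z-w|^{-\beta}\,\d z\,\d w = c\,t^{(2d-\beta)/\alpha}$ --- uses the hypothesis exactly at the scale where it is available, and trades the integrable singularity $s^{-\beta/\alpha}$ for a uniform bound times the length of the time interval; both routes produce the exponent $(\alpha-\beta)/\alpha$. In short, what you flag as ``the only delicate point'' is precisely the step the paper glosses over, and your restriction to $s\le t/2$ is the clean way to make that step rigorous.
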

\begin{proof}
We obviously have $\E|u_t(x)u_t(y)|\geq \kappa$.  An application of the above proposition gives
\begin{align*}
\E|u_t(x)u_t(y)|&\geq \kappa^{1+\lambda}\int_0^t\int_{\R^d\times\R^d}p_s(x-z)p_s(y-w)f(z-w)\,\d z\d w\d s\\
&\geq c_1t^{(\alpha-\beta)/\alpha}.
\end{align*}
We have also used the fact that initial condition is non-negative.
\end{proof}

We now have this technical result which is the final hurdle for the proof of Theorem \ref{Theo3}.

\begin{proposition}
Fix $t>0$ and let $t_0\leq t/6$. Then for $x,\,y\in B(0,\,t^{1/\alpha})$, we have
\begin{align*}
\int_0^t\int_{\R^d\times\R^d}p_{t+t_0-s}(x-z)p_{t+t_0-s}(y-w)\E|u_s(z)u_s(w)|^{1+\gamma}f(z-w)\,\d z\,\d w\geq c_1t^{2(\alpha-\beta)/\alpha},
\end{align*}
for some constant $c_1$.
\end{proposition}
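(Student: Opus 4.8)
The plan is to discard most of the domain of integration and keep only a sub-region on which all the relevant length scales are forced to be comparable to $t^{1/\alpha}$, so that every factor in the integrand collapses to a clean power of $t$. Concretely, since $t_0\leq t/6$, I would restrict the time variable to $s\in[t/3,\,t/2]$ and the space variables to $z,\,w\in B(0,\,s^{1/\alpha})$, simply dropping the (non-negative) contribution of the rest of the domain. For this range of $s$ one has $s\asymp t$, while $\tau:=t+t_0-s$ satisfies $\tau\in[t/2,\,5t/6]$, so $\tau\asymp t$ and moreover $B(0,\,s^{1/\alpha})\subseteq B(0,\,t^{1/\alpha})$. The factor $t/6$ in the hypothesis is exactly what keeps $\tau$ pinned to the scale $t$ throughout this interval.

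Next I would dispose of the two heat-kernel factors. For $x\in B(0,\,t^{1/\alpha})$ and $z\in B(0,\,s^{1/\alpha})$ one has $|x-z|\leq 2t^{1/\alpha}$, and since $\tau\asymp t$ the estimate \eqref{heat} gives both $\tau^{-d/\alpha}\geq c\,t^{-d/\alpha}$ and $\tau/|x-z|^{d+\alpha}\geq (t/2)/(2t^{1/\alpha})^{d+\alpha}=c\,t^{-d/\alpha}$, whence $p_{\tau}(x-z)\geq c\,t^{-d/\alpha}$ and likewise $p_{\tau}(y-w)\geq c\,t^{-d/\alpha}$. This is the crux of the argument: because the three scales $s^{1/\alpha}$, $\tau^{1/\alpha}$ and $t^{1/\alpha}$ are all comparable, both kernels sit at their central value $t^{-d/\alpha}$ \emph{uniformly} in $z,\,w\in B(0,\,s^{1/\alpha})$, even though $x,\,y$ may lie near the boundary of $B(0,\,t^{1/\alpha})$ and the balls $B(x,\tau^{1/\alpha})$, $B(0,s^{1/\alpha})$ need not be centred together.

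For the surviving factors I would invoke Proposition \ref{correlation-lower-bound}: since $z,\,w\in B(0,\,s^{1/\alpha})$ it yields $\E|u_s(z)u_s(w)|\geq c_1 s^{(\alpha-\beta)/\alpha}\geq c\,t^{(\alpha-\beta)/\alpha}$, which I then raise to the power $1+\gamma$. The Riesz weight is handled by scaling $z=s^{1/\alpha}\zeta$, $w=s^{1/\alpha}\eta$, giving $\int_{B(0,s^{1/\alpha})^2}|z-w|^{-\beta}\,\d z\,\d w=c_\beta\,s^{(2d-\beta)/\alpha}\asymp t^{(2d-\beta)/\alpha}$, where $c_\beta<\infty$ precisely because $\beta<d$. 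Multiplying the four uniform lower bounds, integrating over the $s$-interval of length $t/6$, and collecting exponents produces $c\,t^{\,1-\beta/\alpha+(1+\gamma)(\alpha-\beta)/\alpha}=c\,t^{(2+\gamma)(\alpha-\beta)/\alpha}$.

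I expect the only genuine wrinkle to be that this exponent is $(2+\gamma)(\alpha-\beta)/\alpha$ rather than the advertised $2(\alpha-\beta)/\alpha$. Since $\gamma>0$ and $(\alpha-\beta)/\alpha>0$, the two differ by the positive power $t^{\gamma(\alpha-\beta)/\alpha}$, so the stated bound follows for $t\geq 1$, which is the only regime used in the blow-up argument for Theorem \ref{Theo3}. Equivalently, once $s$ is large enough that $c_1 s^{(\alpha-\beta)/\alpha}\geq 1$ one may replace the $(1+\gamma)$-th power by the first power before integrating (a base $\geq 1$ raised to an exponent $\geq 1$ only grows), and the bookkeeping then returns the exponent $2(\alpha-\beta)/\alpha$ on the nose. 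Either way, once the sub-region has been chosen so that $s^{1/\alpha}$, $(t+t_0-s)^{1/\alpha}$ and $t^{1/\alpha}$ are all comparable, no delicate estimate remains and the scale-matching step is where all the care lies.
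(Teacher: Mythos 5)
Your computations are correct and the architecture is the same as the paper's: restrict to a time window and spatial balls on which $s$, $t+t_0-s$ and $t$ are all comparable, then feed in Proposition \ref{correlation-lower-bound}, the heat kernel bound \eqref{heat}, and the scaling of the Riesz kernel. The paper takes $s\in[(t+t_0)/2,\,3(t+t_0)/4]$ and $z,w\in B(0,\,(t+t_0-s)^{1/\alpha})$ where you take $s\in[t/3,\,t/2]$ and $z,w\in B(0,\,s^{1/\alpha})$, and the bookkeeping is distributed dually (the paper bounds $f(z-w)\geq c\,s^{-\beta/\alpha}$ pointwise and integrates the two kernels to a constant; you bound the kernels pointwise by $c\,t^{-d/\alpha}$ and integrate $f$ to $c\,s^{(2d-\beta)/\alpha}$) — these are equivalent and give the same net power. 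The one substantive difference is the treatment of the exponent $1+\gamma$, and it is exactly where your argument falls short of the statement as written, which fixes an arbitrary $t>0$. The paper peels off the excess power using the standing hypothesis $\kappa>0$ of Theorem \ref{Theo3}: since $\E|u_s(z)u_s(w)|\geq \kappa$, one has $\bigl(\E|u_s(z)u_s(w)|\bigr)^{1+\gamma}\geq \kappa^{\gamma}\,\E|u_s(z)u_s(w)|\geq c\,\kappa^{\gamma}s^{(\alpha-\beta)/\alpha}$, and this single move produces the exponent $2(\alpha-\beta)/\alpha$ on the nose, with a constant depending on $\kappa$ and $\gamma$ but uniform over all $t>0$. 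Both of your variants — keeping the full power $(1+\gamma)$ and invoking $t\geq 1$, or replacing $a^{1+\gamma}$ by $a$ once $a\geq 1$ — need $t$ bounded away from $0$, so strictly they prove the proposition only for large $t$. You are right that this restriction is harmless for the paper's purposes (Theorem \ref{Theo3} applies the proposition with $T\gg 1$), but the $\kappa^{\gamma}$ device closes the gap entirely and is the small missing idea separating your proof from the paper's.
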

\begin{proof}
Using the fact that $\E|u_t(x)u_t(y)|\geq \kappa$ and  Proposition \ref{correlation-lower-bound}, we write
\begin{align*}
\int_0^t&\int_{\R^d\times\R^d}p_{t+t_0-s}(x-z)p_{t+t_0-s}(y-w)\E|u_s(z)u_s(w)|^{1+\gamma}f(z-w)\,\d z\,\d w\,\d s\\
&\geq \kappa^{\gamma}\int_0^ts^{(\alpha-2\beta)/\alpha}\int_{B(0,\,s^{1/\alpha})\times B(0,\,s^{1/\alpha})}p_{t+t_0-s}(x-z)p_{t+t_0-s}(y-w)\,\d z\,\d w \,\d s\\
&\geq \kappa^{\gamma}\int_{(t+t_0)/2}^{3(t+t_0)/4}s^{(\alpha-2\beta)/\alpha}\int_{B(0,\,(t+t_0-s)^{1/\alpha})\times B(0,\,(t+t_0-s)^{1/\alpha})}p_{t+t_0-s}(x-z)p_{t+t_0-s}(y-w)\,\d z\,\d w \,\d s.
\end{align*}
We have used the fact that if $s\geq \frac{t+t_0}{2}$, then $s\geq t-s+t_0$. We now observe that since $s\leq 3(t+t_0)/4$, we have $s\leq 3(t+t_0-s)$. We can now see that
\begin{align*}
|x-z|&\leq t^{1/\alpha}+(t-s+t_0)^{1/\alpha}\\
&\leq (t-s+t_0+s)^{1/\alpha}+(t-s+t_0)^{1/\alpha}\\
&\leq c_2(t-s+t_0)^{1/\alpha},
\end{align*}
for some constant $c_2$.  We now use the bound on the heat kernel to conclude that
\begin{align*}
\int_{(t+t_0)/2}^{3(t+t_0)/4}s^{(\alpha-2\beta)/\alpha}&\int_{B(0,\,(t+t_0-s)^{1/\alpha})\times B(0,\,(t+t_0-s)^{1/\alpha})}p_{t+t_0-s}(x-z)p_{t+t_0-s}(y-w)\,\d z\,\d w \,\d s\\
&\geq c_3t^{2(\alpha-\beta)/\alpha}.
\end{align*}
\end{proof}
As the proof shows, the above is not sharp. But this is sufficient for our needs here.
\begin{proof}[Proof of Theorem \ref{Theo3}]
The mild formulation and the fact that initial condition is bounded below gives
\begin{align}\label{time shift}
\E |u_{T+t}(x)&u_{T+t}(y)|\nonumber\\
&\geq \kappa^2+\int_0^{T+t}\int_{\R^d\times \R^d}p_{T+t-s}(x-z)p_{T+t-s}(y-w)\E|u_s(z)u_s(w)|^{1+\lambda}f(z-w)\d z\d w\d s\nonumber\\
&= \kappa^2+\int_0^{T}\int_{\R^d\times \R^d}p_{T+t-s}(x-z)p_{T+t-s}(y-w)\E|u_s(z)u_s(w)|^{1+\gamma}f(z-w)\d z\d w\d s\nonumber\\
&+\int_0^{t}\int_{\R^d\times \R^d}p_{t-s}(x-z)p_{t-s}(y-w)\E|u_{T+s}(z)u_{T+s}(w)|^{1+\gamma}f(z-w)\d z\d w\d s.
\end{align}
The second term appearing in the above is obtained via a change of variable. We will use this `time shift' trick in other proofs as well. We will assume that $T\gg 1$ and $t\leq T/6$, so that we can use the previous Proposition to bound the second term. To bound  the third term, we use similar arguments as in the proof of Theorem \ref{theo2}. So upon setting
\begin{align*}
F(s):=\inf_{x,\,y\in B(0,\,1)}\E |u_{T+s}(x)u_{T+s}(y)|,
\end{align*}
we obtain
\begin{align*}
F(t)\geq \kappa^2+c_1T^{2(\alpha-\beta)/\alpha}+c_2\int_0^{t}F(s)^{1+\gamma}\,\d s.
\end{align*}
So as long as $\kappa$ is strictly positive, we will have blow up of $F$ for any fixed small time; we just need to take $T$ large enough. The result now follows from the proof of Theorem \ref{theo2}.
\end{proof}

\section{Proof of Theorems \ref{energy-white}, \ref{energy-colored} and \ref{Dirichlet}}
The proof of Theorem \ref{energy-white} relies on the following proposition.
\begin{proposition}
Suppose that Assumption \ref{initial} holds. Then, there exists a $\tilde{t},\,K>0$ such that for all $t\geq \tilde{t}$, we have
\begin{equation*}
\inf_{x\in B(0,\,1)}\E|u_t(x)|^2=\infty,
\end{equation*}
whenever $K_{u_0}>K$.
\end{proposition}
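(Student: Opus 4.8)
The plan is to reduce the statement to the finite-interval renewal inequality of Proposition \ref{volterra}, with the ``time-shift'' trick from the proof of Theorem \ref{Theo3} used to convert the mere integral lower bound on $u_0$ (Assumption \ref{initial}) into a genuine pointwise lower bound on the solution. The key structural point, in contrast with Theorem \ref{theo1}, is that here the deterministic term $(\cG u)_t(x)$ can only be controlled after waiting a fixed time $t_0$, and only on the ball $B(0,1)$; this localization is exactly what forces the requirement $K_{u_0} > K$ rather than allowing arbitrary positive initial data.

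First I would fix the time $t_0$ supplied by Proposition \ref{determ} and study $\E|u_{t_0+t}(x)|^2$ for $x \in B(0,1)$ and $t \in (0, t_0]$. Writing the mild formulation \eqref{mild} started at time $0$ and applying the Walsh isometry gives
\begin{equation*}
\E|u_{t_0+t}(x)|^2 = |(\cG u)_{t_0+t}(x)|^2 + \int_0^{t_0+t}\int_\R p^2_{t_0+t-s}(x-y)\,\E|\sigma(u_s(y))|^2\,\d y\,\d s.
\end{equation*}
By Proposition \ref{determ} the first term is at least $c_1^2 K_{u_0}^2$ uniformly in $x \in B(0,1)$, and this will serve as the constant $A$ in the renewal inequality. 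For the stochastic integral I would discard the nonnegative contribution of $s \in (0, t_0)$ and, in the remaining range $s \in [t_0, t_0+t]$, substitute $r = s - t_0$ to obtain $\int_0^t\int_\R p^2_{t-r}(x-y)\,\E|\sigma(u_{t_0+r}(y))|^2\,\d y\,\d r$.

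Next I would apply the growth condition \eqref{growth} together with Jensen's inequality, exactly as in the proof of Theorem \ref{theo1}, to get $\E|\sigma(u_{t_0+r}(y))|^2 \geq (\E|u_{t_0+r}(y)|^2)^{1+\gamma}$, and then restrict the spatial integral to $y \in B(0,1)$ so that $(\E|u_{t_0+r}(y)|^2)^{1+\gamma} \geq F(r)^{1+\gamma}$, where $F(r) := \inf_{y\in B(0,1)}\E|u_{t_0+r}(y)|^2$. Combining the pieces produces
\begin{equation*}
F(t) \geq c_1^2 K_{u_0}^2 + c_3\int_0^t\frac{F(r)^{1+\gamma}}{(t-r)^{1/\alpha}}\,\d r, \qquad 0 < t \leq t_0,
\end{equation*}
which is precisely the hypothesis of Proposition \ref{volterra} with $A = c_1^2 K_{u_0}^2$, $B = c_3$ and $T = t_0$. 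Choosing $K$ so that $A > A_0$ whenever $K_{u_0} > K$, Proposition \ref{volterra} yields $F(t) = \infty$ for all $t$ beyond some $\tilde t \leq t_0$, and translating back by $t_0$ gives the stated conclusion with threshold $t_0 + \tilde t$.

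The main obstacle is the spatial estimate $\int_{B(0,1)} p^2_{t-r}(x-y)\,\d y \geq c_3 (t-r)^{-1/\alpha}$ holding uniformly for $x \in B(0,1)$. The full-space integral equals $c(t-r)^{-1/\alpha}$ by the scaling property of $p_t$, and since $p^2_{t-r}(x-\cdot)$ concentrates its mass near its peak $y=x \in B(0,1)$, a fixed fraction of that mass remains inside the ball as long as $t$ (hence $t-r$) is kept below a threshold. The only delicate point is uniformity as $x$ approaches $\partial B(0,1)$, which I would handle by shrinking the time window to some $t_1 \leq t_0$ and taking $T = t_1$ in Proposition \ref{volterra}. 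Everything else is a routine combination of the heat-kernel bounds \eqref{heat} with the already-established Propositions \ref{determ} and \ref{volterra}.
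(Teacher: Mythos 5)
Your proposal is correct and follows essentially the same route as the paper's proof: shift time by the $t_0$ of Proposition \ref{determ}, apply the Walsh isometry with the time-shift change of variables, bound the deterministic term below by $c\,K_{u_0}^2$, restrict the stochastic term to $B(0,1)$ using the growth condition and the heat-kernel estimate \eqref{heat} to produce the kernel $(t-s)^{-1/\alpha}$, and invoke Proposition \ref{volterra} for the resulting renewal inequality in $F(s)=\inf_{x\in B(0,1)}\E|u_{s+t_0}(x)|^2$. Your extra care about uniformity of $\int_{B(0,1)}p_{t-r}^2(x-y)\,\d y\geq c(t-r)^{-1/\alpha}$ near $\partial B(0,1)$, resolved by shrinking the time window, is a detail the paper glosses over (it simply assumes $t<1$), but it does not change the argument.
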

\begin{proof}
As with the previous proof, we start off with the following using the Walsh isometry
\begin{align*}
\E|u_t(x)|^2&=|(\cG u)_t(x)|^2+\int_0^t\int_\R p_{t-s}^2(x-y)\E|\sigma(u_s(y))|^2\,\d y\,\d s.
\end{align*}
We can always assume that $\tilde{t}$ to be large. Otherwise, there is nothing to prove. So instead of looking at time $t$, we will look at $t+t_0$ and fix $t_0>0$ later. We have
\begin{align*}
\E|u_{t+t_0}(x)|^2&=|(\cG u)_{t+t_0}(x)|^2+\int_0^{t+t_0}\int_\R p_{t+t_0-s}^2(x-y)\E|\sigma(u_s(y))|^2\,\d y\,\d s.
\end{align*}
We now make an appropriate change of variables and as in \eqref{time shift}, we obtain
\begin{align*}
\E|u_{t+t_0}(x)|^2&\geq |(\cG u)_{t+t_0}(x)|^2+\int_0^t\int_\R p_{t-s}^2(x-y)\E|\sigma(u_{s+t_0}(y))|^2\,\d y\,\d s\\
&:= I_1+I_2.
\end{align*}
We will assume that $t<1$ for most of the rest of the proof. We find a lower bound  on $I_1$ first. Let $x\in B(0,\,1)$, then we fix $t_0$ as in Proposition \ref{determ}. This gives us
\begin{align*}
I_1&\geq c_1K_{u_0}^2,
\end{align*}
where the constant $c_1$ depends on $t_0$. We now look at the second term.
\begin{align*}
I_2&\geq \int_0^t\left(\inf_{y\in B(0,\,1)}\E|u_{s+t_0}(x)|^2\right)^{1+\gamma}\int_{B(0,\,1)}p_{t-s}^2(x-y)\,\d y\,\d s\\
&\geq c_2\int_0^t\left(\inf_{y\in B(0,\,1)}\E|u_{s+t_0}(x)|^2\right)^{1+\gamma}\frac{1}{(t-s)^{1/\alpha}}\,\d s
\end{align*}
Setting $H(s):=\inf_{x\in B(0,\,1)}\E|u_{s+t_0}(x)|^2$, we obtain
\begin{equation*}
H(t)\geq c_1K_{u_0}^2+c_2\int_0^t\frac{H(s)^{1+\gamma}}{(t-s)^{1/\alpha}}\,\d s\quad\text{for}\quad t\leq1.
\end{equation*}
An application of Proposition \ref{volterra} gives the desired result.
\end{proof}
\begin{proof}[\bf Proof of Theorem \ref{energy-white}]
The proof of the theorem now follows from
\begin{align*}
\E|u_t(x)|^2&\geq |(\cG u)_t(x)|^2+\int_0^t\int_\R p_{t-s}^2(x-y)\left(\E|u_s(y))|^2\right)^{1+\gamma}\,\d y\,\d s\\
&\geq |(\cG u)_t(x)|^2+\int_{\tilde{t}}^t\int_{B(0,\,1)} p_{t-s}^2(x-y)\left(\E|u_s(y))|^2\right)^{1+\gamma}\,\d y\,\d s,
\end{align*}
where $t>\tilde{t}$. Now for any $x\in \R$, since the first term of the above display is strictly positive, we have the proof of the theorem.
\end{proof}
%\subsection{Proof of Theorem \ref{energy-colored}}

\begin{proposition}
Let $u_t$ be the solution to \eqref{eq:colored}. Suppose that Assumptions \ref{color} and \ref{initial} hold. Then, there exists a $\tilde{t}>0$ such that for all $t\geq \tilde{t}$, we have
\begin{equation*}
\inf_{x,\,y\in B(0,\,1)}\E|u_t(x)u_t(y)|=\infty,
\end{equation*}
whenever $K_{u_0}>K$ where $K$ is some positive constant.
\end{proposition}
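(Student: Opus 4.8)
The plan is to transplant the product-moment argument of Proposition~\ref{prop-colored} to the present setting, where the initial datum is controlled only through Assumption~\ref{initial}, by using Proposition~\ref{determ} in place of the crude lower bound $(\cG u)_t\ge\kappa$; this is the exact colored-noise counterpart of the white-noise energy proposition proved just above. First I would apply the time-shift device of \eqref{time shift}: starting from the mild formulation \eqref{mild} for the product $u_{t+t_0}(x)u_{t+t_0}(y)$, take expectations (bounding $\E|u_{t+t_0}(x)u_{t+t_0}(y)|\ge\E[u_{t+t_0}(x)u_{t+t_0}(y)]$), and use that the noise is white in time, so that the stochastic integral has mean zero and the cross terms drop out, to get
\begin{align*}
\E|u_{t+t_0}(x)u_{t+t_0}(y)|
&\ge (\cG u)_{t+t_0}(x)(\cG u)_{t+t_0}(y)\\
&\quad+\int_0^{t+t_0}\!\!\int_{\R^d\times\R^d}p_{t+t_0-s}(x-z)p_{t+t_0-s}(y-w)f(z,w)\,\E[\sigma(u_s(z))\sigma(u_s(w))]\,\d z\,\d w\,\d s.
\end{align*}
Discarding the non-negative contribution of the time interval $(0,t_0)$ and changing variables $s\mapsto s+t_0$ in the remaining integral replaces the kernel by $p_{t-s}$ and shifts the integrand to time $s+t_0$.

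Next I would estimate the two resulting terms. For $x,y\in B(0,1)$, fixing $t_0$ as in Proposition~\ref{determ} gives $(\cG u)_{t+t_0}(x)\ge c_1K_{u_0}$ and likewise for $y$, so the deterministic term is bounded below by $c_1^2K_{u_0}^2$. For the stochastic term I would restrict the spatial integral to $B(0,1)\times B(0,1)$, invoke the growth condition \eqref{growth} together with Jensen's inequality to replace $\E[\sigma(u_{s+t_0}(z))\sigma(u_{s+t_0}(w))]$ by $(\E|u_{s+t_0}(z)u_{s+t_0}(w)|)^{1+\gamma}$, and bound this below by $H(s)^{1+\gamma}$ where $H(s):=\inf_{z,w\in B(0,1)}\E|u_{s+t_0}(z)u_{s+t_0}(w)|$. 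Assuming $t\le(1/2)^\alpha$, Proposition~\ref{prop-kernel} (this is where Assumption~\ref{color} enters) bounds the inner spatial integral below by $c\,K_f$, which leaves the renewal inequality
\begin{equation*}
H(t)\ge c_1^2K_{u_0}^2+c\,K_f\int_0^tH(s)^{1+\gamma}\,\d s,\qquad t\le\Big(\tfrac12\Big)^\alpha.
\end{equation*}

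Finally I would appeal to the proof of Proposition~\ref{volterra} with $A=c_1^2K_{u_0}^2$, $B=c\,K_f$ and $T=(1/2)^\alpha$: since the blow-up time is a decreasing function of $A$, taking $K_{u_0}$ large enough (which is the role of the threshold $K$) forces $H$ to blow up before any prescribed time in $(0,(1/2)^\alpha]$, and since $H$ is increasing it is then identically $+\infty$ afterwards. Undoing the shift turns this into $\inf_{x,y\in B(0,1)}\E|u_t(x)u_t(y)|=\infty$ for all $t\ge\tilde t$, with $\tilde t$ of the order of $t_0$. The main obstacle is bookkeeping rather than a new idea: because the initial condition is no longer bounded below pointwise, the deterministic term is the genuine product $(\cG u)_{t+t_0}(x)(\cG u)_{t+t_0}(y)$ and one must route through Proposition~\ref{determ}, which both forces the time shift by $t_0$ and confines $x,y$ to $B(0,1)$; one then has to check that this time shift, the restriction $t\le(1/2)^\alpha$ needed for Proposition~\ref{prop-kernel}, and the range in which Proposition~\ref{determ} applies are mutually compatible, and that the threshold $A_0$ of Proposition~\ref{volterra} indeed translates into a lower bound $K$ on $K_{u_0}$.
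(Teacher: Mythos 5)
Your proposal is correct and follows essentially the same route as the paper's own (much terser) proof: shift time by the $t_0$ of Proposition~\ref{determ} to lower-bound the deterministic product term by $cK_{u_0}^2$, use the growth condition, Jensen, and Proposition~\ref{prop-kernel} (i.e.\ the ideas of Proposition~\ref{prop-colored}) to get the renewal inequality $F(t)\geq c_1K_{u_0}^2+c_2K_f\int_0^tF(s)^{1+\gamma}\,\d s$ on a suitable time range, and conclude via Proposition~\ref{volterra}. The only difference is that you spell out the bookkeeping (the time-shift change of variables, the compatibility of the ranges $t\le t_0$ and $t\le(1/2)^\alpha$) that the paper leaves implicit.
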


\begin{proof}
Our starting point will yet again be the mild formulation from which we obtain
\begin{align*}
\E&|u_t(x)u_t(y)|\\
&\geq (\cG u)_t(x)(\cG u)_t(y)+\int_0^t\int_{\R^d\times\R^d}p_{t-s}(x-z)p_{t-s}(y-w)f(z-w)\E|\sigma(u_s(z))\sigma(u_s(w))|\,\d z \d w \d s.
\end{align*}
This follows essentially the same idea as in the previous proofs. The key idea is to take $t_0$ as in Proposition \ref{determ} and set
\begin{equation*}
F(s):=\inf_{x,\,y\in B(0,\,1)}\E|u_{s+t_0}(x)u_{s+t_0}(y)|.
\end{equation*}
Using the ideas in Proposition \ref{prop-colored}, we obtain
\begin{align*}
F(t)\geq c_1K_{u_0}^2+c_2K_f\int_0^tF(s)^{1+\gamma}\d s,
\end{align*}
valid for a suitable range of $t$. We now use Proposition \ref{volterra} to finish the proof.
\end{proof}
\begin{proof}[\bf Proof of Theorem \ref{energy-colored}]
With the above Proposition, the proof of theorem is now very similar to that of Theorem \ref{energy-white} and is therefore omitted.
\end{proof}

%\subsection{Proof of Theorem \ref{Dirichlet}}
The proof of our final theorem follow a similar pattern to the proofs of the previous results. We emphasis that in the case of \eqref{eq:dir}, the mild solution is given by 
\begin{equation}
u_t(x)=
(\cG_D u)_t(x)+ \int_{\R^d}\int_0^t p_{D, t-s}(x-y)\sigma(u_s(y))F(\d s\,\d y).
\end{equation}
\begin{proof}[\bf Proof of Theorem \ref{Dirichlet}]
As before, we have
\begin{align*}
\E|&u_t(x)u_t(y)|\\
&\geq(\cG_D u)_t(x)(\cG_D u)_t(y)\\
&+\int_0^t\int_{B(0,\,R)\times B(0,\,R)}p_{D,t-s}(x-z)p_{D,t-s}(y-w)f(z-w)\left(\E|u_s(z))u_s(w)|\right)^{1+\gamma}\,\d z\,\d w\,\d s\\
&:=I_1+I_2.
\end{align*}
We look at $I_1$ first. By Proposition \ref{determ-dirich}, if $x,\,y \in B(0,\,R/2)$ and $t$ is small enough, we have $I_1\geq c_2\kappa^2$. We now turn our attention to the second term.
\begin{align*}
I_2&\geq K_f\int_0^t\left(\inf_{x,\,y\in B(0,\,R/2)}\E|u_s(x)u_s(y)| \right)^{1+\gamma}\\
&\times \int_{B(0,\,R/2)\times B(0,\,R/2)}p_{D,t-s}(x-z)p_{D,t-s}(y-w)f(z-w)\,\d z\,\d w\,\d s.
\end{align*}
We now use  equation \eqref{comp} with $\epsilon=R/4$  and Proposition \ref{prop-kernel} with $t\leq (\frac{R}{4})^\alpha$ to obtain
\begin{align*}
\int_{B(0,\,R/2)\times B(0,\,R/2)}p_{D,t-s}&(x-z)p_{D,t-s}(y-w)f(z-w)\,\d z\,\d w\\
&\geq \int_{B(0,\,R/2)\times B(0,\,R/2)}p_{t-s}(x-z)p_{t-s}(y-w)f(z-w)\,\d z\,\d w\\
&\geq K_f.
\end{align*}
This gives
\begin{align*}
I_2&\geq c_3K_f\int_0^t\left(\inf_{x,\,y\in B(0,\,R/2)}\E|u_s(x)u_s(y)| \right)^{1+\gamma}\,\d s.
\end{align*}
By combining the above inequalities and setting
\begin{equation*}
F(s):=\inf_{x,\,y\in B(0,\,R/2)}\E|u_s(x)u_s(y)|,
\end{equation*}
we have
\begin{align*}
F(s)\geq c_2\kappa^2+c_3K_f\int_0^t F(s)^{1+\gamma}\,\d s.
\end{align*}
Using ideas in the proof of Proposition \ref{volterra}, we see that for any $t_0<\left(\frac{R}{2}\right)^\alpha$, there exists a  $\kappa_0$ such that for $\kappa>\kappa_0$, $F(s)=\infty$ for all $s\geq t_0$. To finish the proof, we make the following observation.
\begin{align*}
\E|&u_t(x)|^2\\
&\geq |(\cG u)_t(x)|^2\\
&+\int_{t_0}^t\int_{\R^d\times\R^d}p_{D,t-s}(x-z)p_{D,t-s}(x-w)f(z-w)\left(\E|(u_s(z))(u_s(y))|\right)^{1+\gamma}\,\d z\,\d w\,\d s.
\end{align*}
By the positivity of all the relevant terms involved, we obtain the result.
\end{proof}

\bibliography{Foon}
\end{document}